\def\graybox(#1,#2){
\x=#1 \y=#2 
\z=\x \t=\y
\advance\z by 10 
\advance\t by 10 
\psframe[fillstyle=solid,fillcolor=lightgray,linewidth=0pt](\x,\y)(\z,\t) 
\psline[linewidth=.5pt](\x,\y)(\x,\t)(\z,\t)(\z,\y)(\x,\y)}
\def\emptygraybox(#1,#2){
\x=#1 \y=#2 
\z=\x \t=\y
\advance\z by 10 
\advance\t by 10 
\psframe[fillstyle=solid,fillcolor=lightgray,linewidth=0pt,linecolor=lightgray](\x,\y)(\z,\t)}
\def\blankbox(#1,#2){
\x=#1 \y=#2 
\z=\x \t=\y
\advance\z by 10 
\advance\t by 10 
\psframe[linewidth=.5pt](\x,\y)(\z,\t)}
\def\whitebox(#1,#2){
\x=#1 \y=#2 
\z=\x \t=\y
\advance\z by 10 
\advance\t by 10 
\psframe[fillstyle=solid,fillcolor=white,linewidth=0pt](\x,\y)(\z,\t) 
\psline[linewidth=.5pt](\x,\y)(\x,\t)(\z,\t)(\z,\y)(\x,\y)}
\def\whiteboxb(#1,#2){
\x=#1 \y=#2 
\z=\x \t=\y
\advance\z by 10 
\advance\t by 10 
\psframe[fillstyle=solid,fillcolor=white,linewidth=0pt](\x,\y)(\z,\t)}
\newcommand{\define}{\textbf}
\newcommand{\excise}[1]{}
\newcommand{\ul}{\underline}
\renewcommand{\setminus}{\smallsetminus}
\renewcommand{\phi}{\varphi}
\renewcommand{\tilde}{\widetilde}
\renewcommand{\bar}{\overline}
\newcommand{\Ess}{\mathscr{E}\hspace{-.4ex}ss}
\newcommand{\bk}{\mathbf{k}}
\newcommand{\bp}{\mathbf{p}}
\newcommand{\bq}{\mathbf{q}}
\newcommand{\triple}{{\bm\tau}}
\newtheorem{theorem}{Theorem}[section]
\newtheorem{lemma}[theorem]{Lemma}
\newtheorem{proposition}[theorem]{Proposition}
\newtheorem{corollary}[theorem]{Corollary}
\newtheorem*{thm*}{Theorem}
\newtheorem*{lem*}{Lemma}
\newtheorem*{prop*}{Proposition}
\newtheorem*{cor*}{Corollary}
\theoremstyle{definition}
\newtheorem{definition}[theorem]{Definition}
\newtheorem{remark}[theorem]{Remark}
\newtheorem*{defn*}{Definition}
\newtheorem*{rmk*}{Remark}
\begin{document}

\title{Vexillary signed permutations revisited}

\date{June 1, 2016}

\author{David Anderson}
\address{Department of Mathematics, The Ohio State University, Columbus, OH 43210}
\email{anderson.2804@math.osu.edu}

\author{William Fulton}
\address{Department of Mathematics,
University of Michigan,
Ann Arbor, Michigan  48109-1043, U.S.A.}
\email{wfulton@umich.edu}

\begin{abstract}
We study the combinatorial properties of vexillary signed permutations, which are signed analogues of the vexillary permutations first considered by Lascoux and Sch\"utzenberger.  We give several equivalent characterizations of vexillary signed permutations, including descriptions in terms of essential sets and pattern avoidance, and we relate them to the vexillary elements introduced by Billey and Lam.
\end{abstract}

\thanks{DA was partially supported by NSF Grant DMS-1502201 and a postdoctoral fellowship from the Instituto Nacional de Matem\'atica Pura e Aplicada (IMPA)}

\maketitle

\section*{Introduction}

The class of {\em vexillary permutations} in $S_n$, first identified by Lascoux and Sch\"utzenberger \cite{ls,ls-lrr}, plays a central role in the combinatorics of the symmetric group and the corresponding geometry of Schubert varieties and degeneracy loci.  The name derives from the fact that the Schubert polynomial of a vexillary permutation is equal to a flagged Schur polynomial.  This was given a geometric explanation in \cite{fulton}: vexillary permutations correspond to degeneracy loci defined by simple rank conditions, whose classes are computed by variations of the Kempf-Laksov determinantal formula.

Degeneracy loci of other classical types are indexed by the group $W_n$ of signed permutations.  In the course of proving analogous Pfaffian formulas for such loci \cite{af1,af11}, we constructed {\em vexillary signed permutations}, starting with the notion of a {\em triple}.  A triple is three $s$-tuples of positive integers, $\triple = (\bk,\bp,\bq)$, with $\bk = (0< k_1< \cdots <k_s)$, $\bp = (p_1 \geq \cdots \geq p_s > 0)$, and $\bq = (q_1\geq \cdots \geq q_s >0)$, satisfying $k_{i+1}-k_i \leq p_i-p_{i+1}+q_i-q_{i+1}$ for $1\leq i\leq s-1$.  Given such a triple, one constructs a signed permutation $w=w(\triple)$ (see \S\ref{s.triples} for the details).  By definition, our vexillary signed permutations are the ones arising this way.

Vexillary permutations in $S_n$ have many equivalent characterizations, some of which will be reviewed below; others may be found in \cite{macdonald}.  The quickest one is via {\em pattern avoidance}: a permutation $v$ is vexillary if and only if it avoids the pattern $[2\;1\;4\;3]$---that is, there are no indices $a<b<c<d$ such that $v(b)<v(a)<v(d)<v(c)$.  Another is that the Stanley symmetric function of a vexillary permutation is equal to a single Schur function.  The latter property was taken as the starting point for Billey and Lam's extension of ``vexillary'' to other Lie types: they defined three distinct classes of vexillary elements in types B, C, and D, whose Stanley functions are equal to single Schur $P$- or $Q$-functions \cite{bl}.  Our starting point is the geometric property: the vexillary signed permutations considered in \cite{af1,af11} correspond to degeneracy loci defined by rank conditions of a particularly simple kind, and whose Schubert polynomials can be written as flagged Pfaffians.  As often happens, properties that coincide in type A diverge in other types---it turns out that Billey and Lam's type B vexillary elements are the same as our vexillary signed permutations, which do not depend on type.
\footnote{Since a motivating property of our vexillary signed permutations is the flagged Pfaffian formula for Schubert polynomials, they could be called {\it Pfaffian-vexillary}, when it is necessary to distinguish them from Billey and Lam's vexillary elements.}

The main goal of this article is to provide alternative characterizations of vexillary signed permutations.  In addition to a signed pattern avoidance criterion, we give characterizations of vexillary signed permutations in terms of essential sets of rank conditions, embeddings in symmetric groups, and Stanley symmetric functions (the latter as a consequence of the coincidence with Billey-Lam's type B vexillary elements).  

The definition of a vexillary signed permutation $w(\triple)$, given combinatorially in \S\ref{s.triples}, comes with a geometric explanation in terms of degeneracy loci.  Given an odd-rank vector bundle $V$ on a variety $X$, equipped with a nondegenerate quadratic form and flags of isotropic subbundles $V \supset E_1 \supset E_2 \supset \cdots$ and $V \supset F_1 \supset F_2 \supset \cdots$, a signed permutation $w$ determines a degeneracy locus $\Omega_w \subseteq X$, defined by imposing certain rank conditions $\dim(E_p \cap F_q) \geq k$.  Given a triple $\triple=(\bk,\bp,\bq)$, the vexillary signed permutation $w(\triple)$ is defined so that the rank conditions for the corresponding degeneracy locus are $\dim(E_{p_i}\cap F_{q_i}) \geq k_i$, for $1\leq i\leq s$, and so that $w(\triple)$ is minimal (in Bruhat order) with this property.  The inequalities required on $\bk,\bp,\bq$ guarantee that these rank conditions are feasible: they come from the inclusion of vector spaces
\[
  (E_{p_{i+1}} \cap F_{q_{i+1}})/(E_{p_i} \cap F_{q_i}) \subseteq E_{p_{i+1}}/E_{p_i} \oplus F_{q_{i+1}}/F_{q_i}.
\]
There is an equivalent interpretation for even-rank vector bundles with a symplectic form.

For degeneracy loci corresponding to ordinary permutations, a minimal list of non-redundant rank conditions is determined by the {\em essential set} of the permutation \cite{fulton}.  The essential set defined in \cite{fulton} consists of pairs $(p,q)$ appearing in rank conditions $\dim(E_p\cap F_q) \geq k$; the value of $k$ is determined by a rank function associated to the permutation.  Here we will modify the definitions slightly, for both ordinary and signed permutations, to include the value of the rank function.  Our essential sets, introduced and studied in \cite{a-diag}, consist of certain ``basic triples'' $(k,p,q)$; the pairs $(p,q)$ will be called {\em essential positions}.  (Precise definitions are reviewed in \S\ref{s.diagrams}.)

One characterization of vexillary signed permutations is that the corresponding essential positions $(p_i,q_i)$ may be ordered so that $p_1\geq \cdots \geq p_s >0$ and $q_1\geq \cdots \geq q_s >0$.  Another is given via comparison with symmetric groups: the definition of the group of signed permutations leads naturally to an embedding $\iota\colon W_n\hookrightarrow S_{2n+1}$, and remarkably, a signed permutation $w$ is vexillary if and only if $\iota(w)$ is.  (The same statement applies to another natural embedding $\iota'\colon W_n \hookrightarrow S_{2n}$.)  We also give a characterization via {\em signed pattern avoidance}, analogous to the pattern avoidance criterion for $S_n$; see \S\ref{s.patterns} for details.

\begin{thm*}
Let $w$ be a signed permutation.  The following are equivalent:
\begin{enumerate}
\item $w$ is vexillary, i.e., it is equal to $w(\triple)$ for some triple $\triple$. \label{tcond.def}

\smallskip

\item The essential positions of $w$ can be ordered $(p_1,q_1), \ldots, (p_s,q_s)$, so that $p_1 \geq \cdots \geq p_s > 0$ and $q_1 \geq \cdots \geq q_s >0$. \label{tcond.ess}

\smallskip

\item $\iota(w)$ is vexillary, as a permutation in $S_{2n+1}$. \label{tcond.perm}

\smallskip

\item $\iota'(w)$ is vexillary, as a permutation in $S_{2n}$. \label{tcond.perm-even}

\smallskip

\item  $w$ avoids the nine signed patterns $[2\;1]$, $[\bar{3}\;2\;\bar{1}]$, $[\bar{4}\;\bar{1}\;\bar{2}\;3]$, $[\bar{4}\;1\;\bar{2}\;3]$, $[\bar{3}\;\bar{4}\;\bar{1}\;\bar{2}]$, $[\bar{3}\;\bar{4}\;1\;\bar{2}]$, $[\bar{2}\;\bar{3}\;4\;\bar{1}]$, $[2\;\bar{3}\;4\;\bar{1}]$, and $[3\;\bar{4}\;\bar{1}\;\bar{2}]$. \label{tcond.pattern}

\end{enumerate}
\end{thm*}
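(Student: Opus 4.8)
The plan is to prove the five conditions equivalent by routing everything through the essential-set condition (ii) and the type A vexillarity of the two embeddings, taking as known the classical type A theorem that an ordinary permutation is vexillary if and only if its essential positions can be ordered into a weakly decreasing ``staircase'' $p_1 \geq \cdots \geq p_s$, $q_1 \geq \cdots \geq q_s$, equivalently if and only if it avoids the pattern $[2\;1\;4\;3]$. Thus I would organize the argument as a hub-and-spoke around (ii) and (iii): first (i) $\Leftrightarrow$ (ii), then (ii) $\Leftrightarrow$ (iii) and (ii) $\Leftrightarrow$ (iv) via the embeddings, and finally (iii) $\Leftrightarrow$ (v) via a pattern-lifting lemma.

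For (i) $\Leftrightarrow$ (ii), I would analyze the essential set of $w(\triple)$ directly from the construction in \S\ref{s.triples}. The defining rank conditions $\dim(E_{p_i}\cap F_{q_i}) \geq k_i$ are, after discarding redundancies, recorded by the basic triples $(k_i,p_i,q_i)$, so the essential positions of $w(\triple)$ form a subset of $\{(p_i,q_i)\}$ and inherit the weakly decreasing order built into the triple; this gives (i) $\Rightarrow$ (ii). For the converse, starting from essential positions ordered as a staircase, I would read off the associated $k$-values from the rank function to assemble a candidate triple $(\bk,\bp,\bq)$, verify the defining inequalities $k_{i+1}-k_i \leq (p_i-p_{i+1})+(q_i-q_{i+1})$ from the feasibility encoded by the rank function (precisely the subspace inclusion displayed in the introduction), and check minimality, i.e. that $w = w(\triple)$.

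For (ii) $\Leftrightarrow$ (iii) and (ii) $\Leftrightarrow$ (iv), the key is a comparison of essential sets under the embedding. Since $\iota(w)$ permutes the symmetric set $\{-n,\ldots,n\}$ with $\iota(w)(-i)=-\iota(w)(i)$, its rank matrix is symmetric under the central involution, and its essential positions occur in symmetric pairs or on the axis of symmetry. Passing to a fundamental domain identifies them with the essential positions of $w$, and I would check this identification respects the staircase property; combined with the type A essential-set criterion, this yields (ii) $\Leftrightarrow$ (iii). The same argument applied to $\iota'$ gives (ii) $\Leftrightarrow$ (iv); alternatively (iii) $\Leftrightarrow$ (iv) follows by comparing $\iota$ and $\iota'$, which differ only by the central fixed point and so carry the same essential structure away from the center. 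Finally, for (iii) $\Leftrightarrow$ (v) I would invoke the type A pattern criterion to replace ``$\iota(w)$ vexillary'' by ``$\iota(w)$ avoids $[2\;1\;4\;3]$,'' and then prove a lifting lemma: $w$ avoids all nine signed patterns if and only if $\iota(w)$ avoids $[2\;1\;4\;3]$. One direction is a finite check that each listed signed pattern forces a $[2\;1\;4\;3]$ in $\iota(w)$.

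The hard part will be the reverse direction of this lifting lemma, namely extracting exactly one of the nine signed patterns from an arbitrary $[2\;1\;4\;3]$ occurrence in the symmetric permutation $\iota(w)$. Given four positions of $\{-n,\ldots,n\}$ realizing the pattern, I would normalize the occurrence using the sign symmetry---replacing indices by their negatives, reordering, and tracking which of the four lie in the positive range, the negative range, or at the center---and then enumerate the resulting configurations, each restricting to one of the nine patterns on the positive indices of $w$. The genuine obstacle is the bookkeeping forced by the relation $\iota(w)(-i)=-\iota(w)(i)$, which constrains the relative order of mirrored entries and drives the case analysis; the crux is to verify that these nine patterns are both necessary and sufficient, neither more nor fewer.
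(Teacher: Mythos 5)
Your proposal is correct and takes essentially the same route as the paper: the paper proves the cycle (i)~$\Rightarrow$~(ii)~$\Rightarrow$~(iii)~$\Rightarrow$~(i) using exactly your ingredients (the descent/essential-set analysis of $w(\triple)$ from the construction, the central symmetry of $\Ess(\iota(w))$, determinacy of a permutation by its essential set, and the type~A vexillarity criterion), and establishes (iii)~$\Leftrightarrow$~(v) by the same two-direction pattern-lifting case analysis you outline, with (iv) handled by running the identical analysis for $\iota'$. Your hub-and-spoke organization around (ii)--(iii) versus the paper's cycle is a cosmetic difference only.
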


\noindent
The theorem is a consequence of Theorem~\ref{t.vex-characterization} combined with Proposition~\ref{p.patterns} and Remark~\ref{r.even}.

A triple $\triple$ determines a strict partition $\lambda(\triple)$ of length $s$, by setting $\lambda_{k_i} = p_i+q_i-1$, and filling in the remaining $\lambda_k$ minimally subject to the strict decreasing requirement; these partitions index the multi-Schur Pfaffians appearing as degeneracy locus formulas in \cite{af1}.  In \S\ref{s.lyd}, we use ``labellings'' of their (shifted) Young diagrams to describe how to move between vexillary signed permutations along chains in Bruhat order.  We also show that the sum of the parts of $\lambda(\triple)$ computes the length of $w(\triple)$ (Corollary~\ref{c.ydl}).

The pattern avoidance criterion \eqref{tcond.pattern} coincides with that of Billey and Lam in type B, and as a consequence, we can add another equivalent condition to the list in the above theorem: writing $H_w$ for the Stanley symmetric function of type B (see, e.g., \cite{bh} or \cite[(6.3)]{fk}), we have
\begin{enumerate}\setcounter{enumi}{5}
\item {\it $H_w$ is equal to a Schur $P$-function $P_{\lambda}$.}\label{tcond.stanley}
\end{enumerate}
(In fact, Corollary~\ref{c.stanley} says $H_{w(\triple)} = P_{\lambda(\triple)}$.)  The equivalence of \eqref{tcond.stanley} with \eqref{tcond.pattern} is the content of \cite[Theorem~14]{bl}.  Billey and Lam also give pattern avoidance criteria for determining when other variants of Stanley symmetric functions are equal to $P$- and $Q$-functions, but our vexillary signed permutations form a strict subset of the those defined in \cite{bl} for types C and D.

Is there a geometric property enjoyed by Billey-Lam's vexillary elements of type C or D?  In \cite{af11}, we find ``flagged theta-polynomial'' formulas for a larger class of signed permutations constructed from generalized triples (so they might be called {\it theta-vexillary}), but they are not the same as any of Billey-Lam's vexillary elements.

\medskip
\noindent
{\it Acknowledgements.}  We are grateful to S.~Billey for many helpful suggestions and comments.

\medskip
\noindent
{\it Notation.}  
We refer to \cite[\S8.1]{bb} for details on signed permutations, and review the notation and conventions here.  
We will consider permutations of (positive and negative) integers
\[
  \ldots, \bar{n}, \ldots, \bar{2},\bar{1},0,1,2,\ldots,n,\ldots,
\]
using the bar to denote a negative sign, and we take the natural order on them, as above.  All permutations are finite, in the sense that $v(m)=m$ whenever $|m|$ is sufficiently large.  We generally write permutations in $S_{2n+1}$ using one-line notation, by listing the values $v(\bar{n})\;v(\bar{n-1}) \cdots v(n)$.

A {\em signed permutation} is a permutation $w$ with the property that for each $i$, $w(\bar\imath) = \bar{w(i)}$.  A signed permutation is in $W_n$ if $w(m) = m$ for all $m>n$; this is a group isomorphic to the hyperoctahedral group, the Weyl group of types $B_n$ and $C_n$.  When writing signed permutations in one-line notation, we only list the values on positive integers: $w\in W_n$ is represented as $w(1)\;w(2)\;\cdots\;w(n)$.  For example, $w=\bar{2}\;1\;\bar{3}$ is a signed permutation in $W_3$, and $w(\bar{3}) = 3$ since $w(3)=\bar{3}$.  The {\it simple transpositions} $s_0,\ldots,s_n$ generate $W_n$, where for $i>0$, right-multiplication by $s_i$ exchanges entries in positions $i$ and $i+1$, and right-multiplication by $s_0$ replaces $w(1)$ with $\bar{w(1)}$.  The {\it length} of a signed permutation $w$ is the least number $\ell=\ell(w)$ such that $w = s_{i_1}\cdots s_{i_\ell}$. 
The \emph{longest element} in $W_n$, denoted $w_\circ^{(n)}$, is $\bar{1}\;\bar{2}\;\cdots\;\bar{n}$, and has length $n^2$.

The definition of $W_n$ presents it as embedded in the symmetric group $S_{2n+1}$, considering the latter as the group of all permutations of the integers $\bar{n},\ldots,0,\ldots,n$.  This is the \define{odd} embedding, and we write $\iota\colon W_n \hookrightarrow S_{2n+1}$ for emphasis when a signed permutation is considered as a full permutation.  Specifically, $\iota$ sends $w=w(1)\;w(2)\;\cdots\;w(n)$ to the permutation
\[
  \bar{w(n)}\;\cdots\;\bar{w(2)}\;\bar{w(1)}\;0\;w(1)\;w(2)\;\cdots\;w(n)
\]
in $S_{2n+1}$.  Occasionally, we will refer to the \define{even} embedding $\iota'\colon W_n \hookrightarrow S_{2n}$, defined similarly to $\iota$ by considering the even symmetric group as permutations of $\{\pm1,\ldots,\pm n\}$ and omitting the value $w(0)=0$.

A permutation $v$ has a \define{descent} at position $i$ if $v(i)>v(i+1)$; here $i$ may be any integer.  The same definition applies to signed permutations $w$, but we only consider descents at positions $i\geq 0$, following the convention of recording the values of $w$ only on positive integers.  A descent at $0$ simply means that $w(1)$ is negative.  For example, $w = \bar{2}\;1\;\bar{3}$ has descents at $0$ and $2$, while $\iota(w) = 3\;\bar{1}\;2\;0\;\bar{2}\;1\;\bar{3}$ has descents at $-3$, $-1$, $0$, and $2$.

\section{Diagrams and essential sets}\label{s.diagrams}

A permutation $v\in S_{2n+1}$ can be represented in a $(2n+1)\times(2n+1)$ array of boxes, with rows and columns indexed by $\{\bar{n},\ldots,0,\ldots,n\}$, by placing a dot in position $(v(i),i)$ for $\bar{n}\leq i\leq n$; we refer to this as the {\it permutation matrix} of $v$.  The {\it diagram} of $v$ is the collection of boxes that remain after crossing out those weakly south or east of a dot.  The {\it rank function} of $v$ is defined as
\begin{equation}\label{e.rank-typeA}
  r_v(p,q) = \#\{ i\leq \bar{p} \,|\, v(i) \geq q \},
\end{equation}
for $\bar{n} \leq p,q\leq n$.  This is equal to the number of dots strictly south and weakly west of the box $(q-1,\bar{p})$ in the permutation matrix of $v$.

The $(2n+1)^2$ numbers $r_v(p,q)$ determine $v$, but in fact much less information is required to specify a permutation.  A minimal list of rank conditions determining $v$ was given in \cite{fulton}, by restricting attention to the southeast corners of the diagram.\footnote{By analyzing the possibilities for boxes occurring as southeast corners of the diagram of a permutation, Eriksson and Linusson showed how to reconstruct $v$ from a subset of the essential set \cite{el}; however, the essential set is minimal in the sense of \cite[Lemma~3.10]{fulton}.}  More precisely, following the conventions and terminology of \cite{a-diag} (which differ slightly from those of \cite{fulton}), a pair $(p,q)$ is an {\it essential position} if the box $(q-1,\bar{p})$ is a southeast corner of the diagram of $v$.  The {\it essential set} $\Ess(v)$ is the set of $(k,p,q)$ such that $(p,q)$ is an essential position and $k=r_v(p,q)$.

In formulas, a box $(a,b)$ is a SE corner of the diagram of $v$ if and only if
\begin{align}
 v(b) &> a \geq v(b+1)  \quad \text{ and } \label{e.descents1} \\ 
 v^{-1}(a) &>b \geq v^{-1}(a+1) \tag{\theequation$'$}, \label{e.descents2}
\end{align}
so the essential positions are those $(p,q)$ such that $(q-1,\bar{p})$ satisfies Equations \eqref{e.descents1} and \eqref{e.descents2}.  This characterizes $\Ess(v)$ in terms of the descents of $v$, and will be useful later.

Analogous diagrams and essential sets for signed permutations were described in \cite{a-diag}.  We review the definitions and basic facts briefly here, since essential sets play a role in characterizing vexillary signed permutations.

For a signed permutation $w\in W_n$, the following simple lemma says that the essential positions of the corresponding permutation $\iota(w)\in S_{2n+1}$ are ``symmetric about the origin'' (see Figure~\ref{f.symmetric}).

\begin{lemma}[{\cite[Lemma~1.1]{a-diag}}]\label{l.symmetric-essential}
For $w\in W_n$, the essential set of $\iota(w)\in S_{2n+1}$ possesses the following symmetry: $(k,p,q)$ is in $\Ess(\iota(w))$ if and only if $(k+p+q-1,\bar{p}+1,\bar{q}+1)$ is in $\Ess(\iota(w))$.
\end{lemma}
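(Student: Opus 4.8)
The plan is to exploit the one structural feature distinguishing $\iota(w)$ among permutations of $\{\bar n,\dots,n\}$: since $w$ is a signed permutation, $v:=\iota(w)$ satisfies $v(\bar\imath)=\bar{v(i)}$, i.e.\ $v(-i)=-v(i)$ for every $i$, and hence also $v^{-1}(-j)=-v^{-1}(j)$. Geometrically this says the permutation matrix of $v$ is preserved by the point reflection $\rho\colon(a,b)\mapsto(-a,-b)$ through the central box $(0,0)$, so its diagram is $\rho$-invariant as well. I first note that $(k,p,q)\mapsto(k+p+q-1,\bar p+1,\bar q+1)$ is an involution on triples, so it suffices to prove the two sides are equivalent; I will in fact match them box-by-box and then match the rank values.

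The first step is to translate into the descent conditions \eqref{e.descents1}--\eqref{e.descents2}. By definition $(p,q)$ is an essential position of $v$ exactly when $(q-1,\bar p)$ is a southeast corner of the diagram, while $(\bar p+1,\bar q+1)$ is an essential position exactly when $(\bar q,p-1)$ is. Writing out \eqref{e.descents1} and \eqref{e.descents2} for the box $(q-1,\bar p)$ and substituting $v(\bar p)=\bar{v(p)}$, $v(\bar p+1)=\bar{v(p-1)}$ and the analogous identities for $v^{-1}$, each inequality — after negating and reversing — turns into the corresponding inequality expressing that $(\bar q,p-1)$ is a southeast corner. Thus $(q-1,\bar p)$ is a southeast corner if and only if $(\bar q,p-1)$ is, which identifies the two sets of essential positions.

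It remains to check that the rank values agree, i.e.\ that $r_v(\bar p+1,\bar q+1)=r_v(p,q)+p+q-1$. Here I would use complementary counting in the $(2n+1)\times(2n+1)$ array. Setting $k=r_v(p,q)=\#\{i\le\bar p\mid v(i)\ge q\}$, the substitution $i\mapsto -i$ together with $v(-i)=-v(i)$ rewrites $r_v(\bar p+1,\bar q+1)=\#\{i\le p-1\mid v(i)\ge 1-q\}$ as $\#\{i\ge\bar p+1\mid v(i)<q\}$. Since the columns $\{i\le\bar p\}$ and $\{i\ge\bar p+1\}$ partition the $2n+1$ columns and the rows $\{v(i)\ge q\}$ and $\{v(i)<q\}$ partition the $2n+1$ rows, the four resulting counts sum to $2n+1$; combining this with $\#\{i\le\bar p\}=n-p+1$ and $\#\{v(i)\ge q\}=n-q+1$ and solving for the count of interest yields exactly $k+p+q-1$.

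The two substitution computations are routine. The only genuine subtlety is the shift by $(1,1)$ relating the boxes $(q-1,\bar p)$ and $(\bar q,p-1)$: the reflection $\rho$ by itself carries a southeast corner to a northwest corner, and it is the extra shift that converts it back into a southeast corner. Tracking this shift is precisely what produces the additive term $p+q-1$ in the rank value and the $+1$'s in $\bar p+1,\bar q+1$. I expect this bookkeeping, rather than any real difficulty, to be the crux.
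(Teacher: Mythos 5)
Your proof is correct and complete. Note that the paper itself does not prove this lemma---it imports it by citation from \cite[Lemma~1.1]{a-diag}---so there is no internal proof to compare against; your argument is the natural one, and both halves check out: under $v(\bar\imath)=\bar{v(i)}$ and $v^{-1}(\bar\jmath)=\bar{v^{-1}(j)}$, the corner conditions \eqref{e.descents1}--\eqref{e.descents2} for the box $(q-1,\bar{p})$ translate exactly into those for $(\bar{q},p-1)$, and the four-block count gives $r_v(\bar{p}+1,\bar{q}+1)=(2n+1)-(n-p+1)-(n-q+1)+k=k+p+q-1$, with the map $(k,p,q)\mapsto(k+p+q-1,\bar{p}+1,\bar{q}+1)$ an involution so that one implication suffices.
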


\begin{figure}[ht]
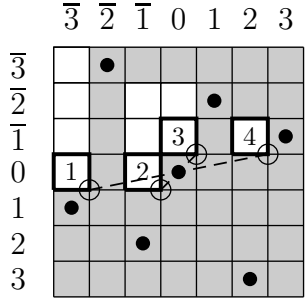


\pspicture(80,80)(-70,-50)

\psset{unit=1.35pt}

\pspolygon[fillstyle=solid,fillcolor=lightgray,linecolor=lightgray](-35,35)(35,35)(35,-35)(-35,-35)(-35,35)

\psline{-}(35,35)(-35,35)
\psline{-}(35,25)(-35,25)
\psline{-}(35,15)(-35,15)
\psline{-}(35,5)(-35,5)
\psline{-}(35,-5)(-35,-5)
\psline{-}(35,-15)(-35,-15)
\psline{-}(35,-25)(-35,-25)
\psline{-}(35,-35)(-35,-35)

\psline{-}(35,35)(35,-35)
\psline{-}(25,35)(25,-35)
\psline{-}(15,35)(15,-35)
\psline{-}(5,35)(5,-35)
\psline{-}(-5,35)(-5,-35)
\psline{-}(-15,35)(-15,-35)
\psline{-}(-25,35)(-25,-35)
\psline{-}(-35,35)(-35,-35)

\pscircle*(-30,-10){2}
\pscircle*(-20,30){2}
\pscircle*(-10,-20){2}
\pscircle*(0,0){2}
\pscircle*(10,20){2}
\pscircle*(20,-30){2}
\pscircle*(30,10){2}

\whitebox(-35,25)
\whitebox(-35,15)
\whitebox(-35,5)
\whitebox(-35,-5)
\whitebox(-15,15)
\whitebox(-15,5)
\whitebox(-15,-5)
\whitebox(-5,15)
\whitebox(-5,5)
\whitebox(15,5)

\psline[linewidth=1.5pt]{-}(-35,-5)(-35,5)(-25,5)(-25,-5)(-35,-5)

\psline[linewidth=1.5pt]{-}(-15,-5)(-15,5)(-5,5)(-5,-5)(-15,-5)

\psline[linewidth=1.5pt]{-}(-5,5)(-5,15)(5,15)(5,5)(-5,5)

\psline[linewidth=1.5pt]{-}(15,5)(15,15)(25,15)(25,5)(15,5)

\pscircle(-25,-5){3}
\pscircle(25,5){3}

\pscircle(-5,-5){3}
\pscircle(5,5){3}

\psline[linestyle=dashed,linewidth=0.5]{-}(-25,-5)(25,5)
\psline[linestyle=dashed,linewidth=0.5]{-}(-5,-5)(5,5)

\rput(-45,30){$\bar{3}$}
\rput(-45,20){$\bar{2}$}
\rput(-45,10){$\bar{1}$}
\rput(-45,0){$0$}
\rput(-45,-10){$1$}
\rput(-45,-20){$2$}
\rput(-45,-30){$3$}

\rput[b](-30,40){$\bar{3}$}
\rput[b](-20,40){$\bar{2}$}
\rput[b](-10,40){$\bar{1}$}
\rput[b](0,40){$0$}
\rput[b](10,40){$1$}
\rput[b](20,40){$2$}
\rput[b](30,40){$3$}

\rput(-30,0){\footnotesize{$1$}}
\rput(-10,0){\footnotesize{$2$}}

\rput(0,10){\footnotesize{$3$}}
\rput(20,10){\footnotesize{$4$}}

\endpspicture

\caption{Diagram and essential set for $v = \iota(\,\bar{2}\;3\;\bar{1}\,)$, with circled corners illustrating the symmetry of Lemma~\ref{l.symmetric-essential}. \label{f.symmetric}}
\end{figure}

\noindent
This implies that half of $\Ess(\iota(w))$ suffices to determine the signed permutation $w$; we will consider those corners appearing in the first $n$ columns.  In general, only a subset of these corners is needed, as shown in \cite{a-diag}.  

As with ordinary permutations, the essential set of a signed permutation is defined in terms of its diagram.  The \define{permutation matrix} of $w\in W_n$ is a $(2n+1)\times n$ array of boxes, with rows labelled $\bar{n},\ldots,0,\ldots,n$ and columns labelled $\bar{n},\ldots,\bar{1}$.  A dot is placed in the boxes $(\bar{w(i)},\bar{\imath})$ for $1\leq i\leq n$.  Additionally, for each dot, the box in the same column but opposite row is marked with an $\times$, as is each box to the right of this one.  (That is, an $\times$ is placed in each box $(a,b)$ such that $a=\bar{w(i)}$ for some $i\leq b$.)  The \define{extended diagram} of $w$ is the set $\tilde{D}_w$ of boxes that remain after crossing out those south or east of a dot in the permutation matrix; the \define{diagram} $D_w$ is the subset of $\tilde{D}_w$ not marked by an $\times$.  The placement of the $\times$'s is closely related to the parametrization of Schubert cells described in \cite{fp}, and the boxes of $D_w$ are in natural bijection with the inversions of $w$.  In particular, the number of boxes in $D_w$ is equal to the length of $w$.  See Figure~\ref{f.essential-ex1} for an illustration.

\begin{figure}[t]
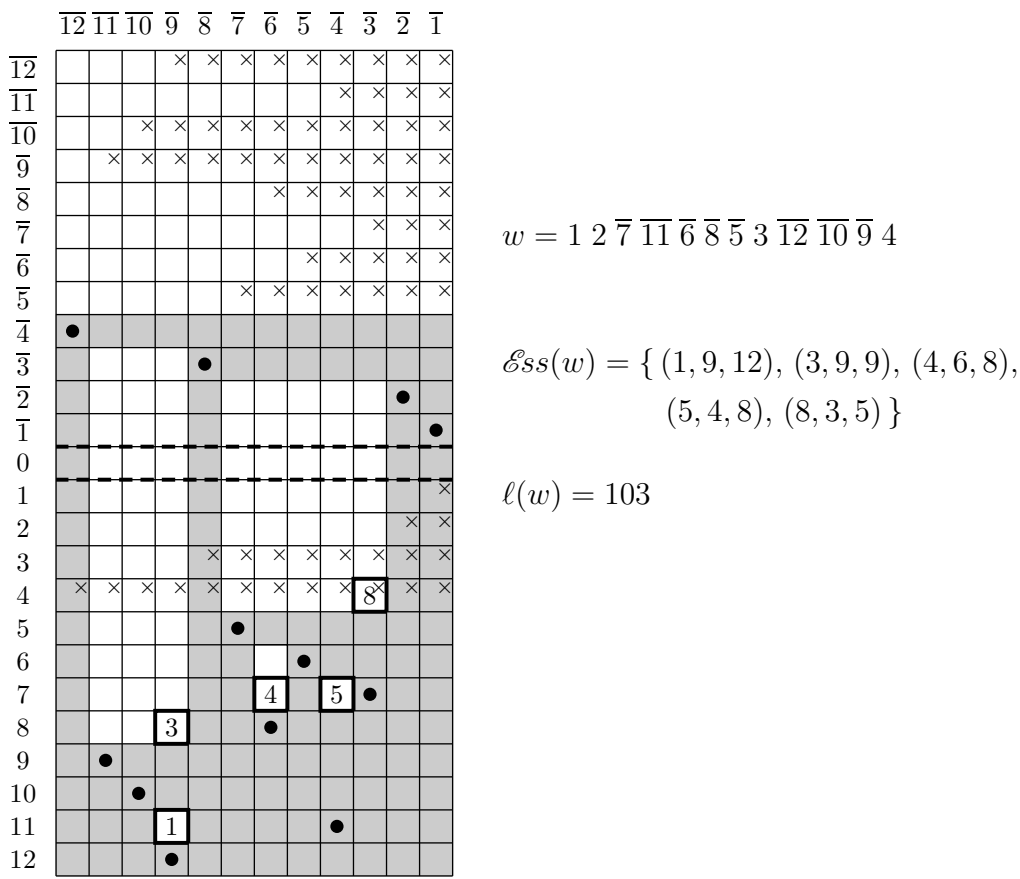

\pspicture(150,200)(-110,-160)

\psset{unit=1.25pt}

\pspolygon[fillstyle=solid,fillcolor=lightgray,linecolor=lightgray](-125,45)(-5,45)(-5,-125)(-125,-125)(-125,45)

\pspolygon[fillstyle=solid,fillcolor=white,linecolor=white](-115,35)(-85,35)(-85,-85)(-115,-85)(-115,35)

\pspolygon[fillstyle=solid,fillcolor=white,linecolor=white](-75,25)(-25,25)(-25,-45)(-75,-45)(-75,25)

\psline{-}(-5,125)(-125,125)
\psline{-}(-5,115)(-125,115)
\psline{-}(-5,105)(-125,105)
\psline{-}(-5,95)(-125,95)
\psline{-}(-5,85)(-125,85)
\psline{-}(-5,75)(-125,75)
\psline{-}(-5,65)(-125,65)
\psline{-}(-5,55)(-125,55)
\psline{-}(-5,45)(-125,45)
\psline{-}(-5,35)(-125,35)
\psline{-}(-5,25)(-125,25)
\psline{-}(-5,15)(-125,15)
\psline{-}(-5,5)(-125,5)
\psline{-}(-5,-5)(-125,-5)
\psline{-}(-5,-15)(-125,-15)
\psline{-}(-5,-25)(-125,-25)
\psline{-}(-5,-35)(-125,-35)
\psline{-}(-5,-45)(-125,-45)
\psline{-}(-5,-55)(-125,-55)
\psline{-}(-5,-65)(-125,-65)
\psline{-}(-5,-75)(-125,-75)
\psline{-}(-5,-85)(-125,-85)
\psline{-}(-5,-95)(-125,-95)
\psline{-}(-5,-105)(-125,-105)
\psline{-}(-5,-115)(-125,-115)
\psline{-}(-5,-125)(-125,-125)

\psline{-}(-5,125)(-5,-125)
\psline{-}(-15,125)(-15,-125)
\psline{-}(-25,125)(-25,-125)
\psline{-}(-35,125)(-35,-125)
\psline{-}(-45,125)(-45,-125)
\psline{-}(-55,125)(-55,-125)
\psline{-}(-65,125)(-65,-125)
\psline{-}(-75,125)(-75,-125)
\psline{-}(-85,125)(-85,-125)
\psline{-}(-95,125)(-95,-125)
\psline{-}(-105,125)(-105,-125)
\psline{-}(-115,125)(-115,-125)
\psline{-}(-125,125)(-125,-125)

{\footnotesize
\rput(-135,120){$\bar{12}$}
\rput(-135,110){$\bar{11}$}
\rput(-135,100){$\bar{10}$}
\rput(-135,90){$\bar{9}$}
\rput(-135,80){$\bar{8}$}
\rput(-135,70){$\bar{7}$}
\rput(-135,60){$\bar{6}$}
\rput(-135,50){$\bar{5}$}
\rput(-135,40){$\bar{4}$}
\rput(-135,30){$\bar{3}$}
\rput(-135,20){$\bar{2}$}
\rput(-135,10){$\bar{1}$}
\rput(-135,0){$0$}
\rput(-135,-10){$1$}
\rput(-135,-20){$2$}
\rput(-135,-30){$3$}
\rput(-135,-40){$4$}
\rput(-135,-50){$5$}
\rput(-135,-60){$6$}
\rput(-135,-70){$7$}
\rput(-135,-80){$8$}
\rput(-135,-90){$9$}
\rput(-135,-100){$10$}
\rput(-135,-110){$11$}
\rput(-135,-120){$12$}
}

{\footnotesize
\rput[b](-120,130){$\bar{12}$}
\rput[b](-110,130){$\bar{11}$}
\rput[b](-100,130){$\bar{10}$}
\rput[b](-90,130){$\bar{9}$}
\rput[b](-80,130){$\bar{8}$}
\rput[b](-70,130){$\bar{7}$}
\rput[b](-60,130){$\bar{6}$}
\rput[b](-50,130){$\bar{5}$}
\rput[b](-40,130){$\bar{4}$}
\rput[b](-30,130){$\bar{3}$}
\rput[b](-20,130){$\bar{2}$}
\rput[b](-10,130){$\bar{1}$}
}

\pscircle*(-120,40){2}
\pscircle*(-110,-90){2}
\pscircle*(-100,-100){2}
\pscircle*(-90,-120){2}
\pscircle*(-80,30){2}
\pscircle*(-70,-50){2}
\pscircle*(-60,-80){2}
\pscircle*(-50,-60){2}
\pscircle*(-40,-110){2}
\pscircle*(-30,-70){2}
\pscircle*(-20,20){2}
\pscircle*(-10,10){2}

\whitebox(-95,-115)

\whitebox(-65,-65)
\whitebox(-65,-75)

\whitebox(-45,-75)

\psline[linestyle=dashed,linewidth=1.5pt]{-}(-5,5)(-125,5)
\psline[linestyle=dashed,linewidth=1.5pt]{-}(-5,-5)(-125,-5)

\psline[linewidth=1.5pt]{-}(-85,-115)(-95,-115)(-95,-105)(-85,-105)(-85,-115)

\psline[linewidth=1.5pt]{-}(-85,-85)(-95,-85)(-95,-75)(-85,-75)(-85,-85)

\psline[linewidth=1.5pt]{-}(-55,-75)(-65,-75)(-65,-65)(-55,-65)(-55,-75)

\psline[linewidth=1.5pt]{-}(-35,-75)(-45,-75)(-45,-65)(-35,-65)(-35,-75)

\psline[linewidth=1.5pt]{-}(-25,-45)(-35,-45)(-35,-35)(-25,-35)(-25,-45)

\rput[bl](-120,-40){\tiny{$\times$}}
\rput[bl](-110,-40){\tiny{$\times$}}
\rput[bl](-100,-40){\tiny{$\times$}}
\rput[bl](-90,-40){\tiny{$\times$}}
\rput[bl](-80,-40){\tiny{$\times$}}
\rput[bl](-70,-40){\tiny{$\times$}}
\rput[bl](-60,-40){\tiny{$\times$}}
\rput[bl](-50,-40){\tiny{$\times$}}
\rput[bl](-40,-40){\tiny{$\times$}}
\rput[bl](-30,-40){\tiny{$\times$}}
\rput[bl](-20,-40){\tiny{$\times$}}
\rput[bl](-10,-40){\tiny{$\times$}}

\rput[bl](-110,90){\tiny{$\times$}}
\rput[bl](-100,90){\tiny{$\times$}}
\rput[bl](-90,90){\tiny{$\times$}}
\rput[bl](-80,90){\tiny{$\times$}}
\rput[bl](-70,90){\tiny{$\times$}}
\rput[bl](-60,90){\tiny{$\times$}}
\rput[bl](-50,90){\tiny{$\times$}}
\rput[bl](-40,90){\tiny{$\times$}}
\rput[bl](-30,90){\tiny{$\times$}}
\rput[bl](-20,90){\tiny{$\times$}}
\rput[bl](-10,90){\tiny{$\times$}}

\rput[bl](-100,100){\tiny{$\times$}}
\rput[bl](-90,100){\tiny{$\times$}}
\rput[bl](-80,100){\tiny{$\times$}}
\rput[bl](-70,100){\tiny{$\times$}}
\rput[bl](-60,100){\tiny{$\times$}}
\rput[bl](-50,100){\tiny{$\times$}}
\rput[bl](-40,100){\tiny{$\times$}}
\rput[bl](-30,100){\tiny{$\times$}}
\rput[bl](-20,100){\tiny{$\times$}}
\rput[bl](-10,100){\tiny{$\times$}}

\rput[bl](-90,120){\tiny{$\times$}}
\rput[bl](-80,120){\tiny{$\times$}}
\rput[bl](-70,120){\tiny{$\times$}}
\rput[bl](-60,120){\tiny{$\times$}}
\rput[bl](-50,120){\tiny{$\times$}}
\rput[bl](-40,120){\tiny{$\times$}}
\rput[bl](-30,120){\tiny{$\times$}}
\rput[bl](-20,120){\tiny{$\times$}}
\rput[bl](-10,120){\tiny{$\times$}}

\rput[bl](-80,-30){\tiny{$\times$}}
\rput[bl](-70,-30){\tiny{$\times$}}
\rput[bl](-60,-30){\tiny{$\times$}}
\rput[bl](-50,-30){\tiny{$\times$}}
\rput[bl](-40,-30){\tiny{$\times$}}
\rput[bl](-30,-30){\tiny{$\times$}}
\rput[bl](-20,-30){\tiny{$\times$}}
\rput[bl](-10,-30){\tiny{$\times$}}

\rput[bl](-70,50){\tiny{$\times$}}
\rput[bl](-60,50){\tiny{$\times$}}
\rput[bl](-50,50){\tiny{$\times$}}
\rput[bl](-40,50){\tiny{$\times$}}
\rput[bl](-30,50){\tiny{$\times$}}
\rput[bl](-20,50){\tiny{$\times$}}
\rput[bl](-10,50){\tiny{$\times$}}

\rput[bl](-60,80){\tiny{$\times$}}
\rput[bl](-50,80){\tiny{$\times$}}
\rput[bl](-40,80){\tiny{$\times$}}
\rput[bl](-30,80){\tiny{$\times$}}
\rput[bl](-20,80){\tiny{$\times$}}
\rput[bl](-10,80){\tiny{$\times$}}

\rput[bl](-50,60){\tiny{$\times$}}
\rput[bl](-40,60){\tiny{$\times$}}
\rput[bl](-30,60){\tiny{$\times$}}
\rput[bl](-20,60){\tiny{$\times$}}
\rput[bl](-10,60){\tiny{$\times$}}

\rput[bl](-40,110){\tiny{$\times$}}
\rput[bl](-30,110){\tiny{$\times$}}
\rput[bl](-20,110){\tiny{$\times$}}
\rput[bl](-10,110){\tiny{$\times$}}

\rput[bl](-30,70){\tiny{$\times$}}
\rput[bl](-20,70){\tiny{$\times$}}
\rput[bl](-10,70){\tiny{$\times$}}

\rput[bl](-20,-20){\tiny{$\times$}}
\rput[bl](-10,-20){\tiny{$\times$}}

\rput[bl](-10,-10){\tiny{$\times$}}

\rput(-90,-110){\footnotesize{$1$}}
\rput(-90,-80){\footnotesize{$3$}}
\rput(-60,-70){\footnotesize{$4$}}
\rput(-40,-70){\footnotesize{$5$}}
\rput(-30,-40){\footnotesize{$8$}}

\rput[l](10,70){$w=1\;2\;\bar{7}\;\bar{11}\;\bar{6}\;\bar{8}\;\bar{5}\; 3\;\bar{12}\;\bar{10}\;\bar{9}\; 4$}
\rput[l](10,30){$\Ess(w) = \{\, (1,9,12),\,(3,9,9),\,(4,6,8),$}
\rput[l](50,15){$\quad (5,4,8),\,(8,3,5) \,\}$}

\rput[l](10,-10){$\ell(w)=103$}



\endpspicture
\caption{Diagram and essential set of a signed permutation.  (This is also the vexillary signed permutation for the triple $\triple = ( \,1\;3\;4\;5\;8\, ,\, 9\;9\;6\;4\;3\, ,\, 12\;9\;8\;8\;5\,)$.)\label{f.essential-ex1}}
\end{figure}

The rank function of $w$ is defined as
\begin{equation}
  r_w(p,q) = \#\{ i \geq p \,|\, w(i) \leq \bar{q} \}.
\end{equation}
Since $w(\bar\imath) = \bar{w(i)}$, this is equivalent to
\begin{equation}
  r_w(p,q) = \#\{ i \leq \bar{p} \,|\, w(i) \geq q \}.
\end{equation}

The \define{essential set} of a signed permutation $w$ is the set of $(k,p,q)$ such that $(q-1,\bar{p})$ is a SE corner of the extended diagram $\tilde{D}_w$ and $k=r_w(p,q)$, with two exceptions.  First, if $p=1$ and $q<0$ (i.e., if the corner appears in the rightmost column and above the middle row in the permutation matrix), then $(k,p,q)$ is not in $\Ess(w)$.  Second, when $p>1$ and $q>0$, $(k,p,q)$ is not in $\Ess(w)$ if there is another SE corner in box $(\bar{q},\bar{p})$, and $k=r_w(p,q)=r_w(p,\bar{q}+1)-q+1$.

The exceptions are easy to understand in the context of linear algebra; see \cite{a-diag} for more explanation.  The first one comes from Lemma~\ref{l.symmetric-essential}: such corners are artifacts of restricting the permutation matrix to $n$ columns, and they do not appear as corners of the diagram of $\iota(w)$.  The second, more complicated exception never applies to the vexillary signed permutations to be defined in the next section.  As we will see in Lemma~\ref{l.ess-set}, when $w$ is vexillary, no SE corners of $\tilde{D}_w$ lie above the middle row, except possibly in the rightmost column.

As with ordinary permutations, a signed permutation is determined by its essential set: $w$ is the minimal element (in Bruhat order on $W_n$) such that $r_w(p,q)\geq k$ for all $(k,p,q)\in\Ess(w)$.  Furthermore, $\Ess(w)$ is the smallest set with this property---choosing any $(k_0,p_0,q_0)\in\Ess(w)$, there exists a $w'\neq w$ such that $r_{w'}(p_0,q_0)<k_0$ and $r_{w'}(p,q)\geq k$ for all $(k,p,q)\in\Ess(w)\setminus\{(k_0,p_0,q_0)\}$.  (This the content of \cite[Theorem~2.3]{a-diag}.)

\section{Triples and vexillary signed permutations}\label{s.triples}

As defined in the introduction, \define{triple} consists of three $s$-tuples of positive integers $\triple = (\bk,\bp,\bq)$, with
\begin{align*}
  \bk &= (0<k_1< \cdots < k_s), \\
  \bp &= (p_1 \geq \cdots \geq p_s >0 ), \\
  \bq &= (q_1 \geq \cdots \geq q_s >0 ),
\end{align*}
satisfying
\[
  p_i-p_{i+1} + q_i-q_{i+1} \geq k_{i+1}-k_i \tag{*} \label{e.triple}
\]
for $1\leq i\leq s-1$.  The triple is \define{essential} if the inequalities \eqref{e.triple} are strict.  
(These inequalities ensure that the rank conditions $\dim(E_{p_i}\cap F_{q_i})\geq k_i$ are feasible, and strict inequalities ensure they are independent.)  Each triple reduces to a unique essential triple, by successively removing each $(k_i,p_i,q_i)$ such that equality holds in \eqref{e.triple}.  Two triples are \define{equivalent} if they reduce to the same essential triple.

Given a triple, one forms a signed permutation $w(\triple)$ as follows.

\begin{enumerate}
\item[(1)] Starting in the $p_1$ position, place $k_1$ consecutive negative entries, in increasing order, ending with $\bar{q_1}$.  Mark these numbers as ``used''.

\smallskip

\item[(i)] For $1<i\leq s$, starting in the $p_i$ position, or the next available position to the right, fill the next available $k_i-k_{i-1}$ positions with negative entries chosen consecutively from the unused numbers, ending with at most $\bar{q_i}$.  (Note: the sequence may have to ``jump'' over previously placed sequences.)\footnote{It is useful to break each step in the construction into sub-steps.  In Step (i), first pick out the $k_i-k_{i-1}$ largest unused entries less than or equal to $\bar{q_i}$ and place them in a ``bin''.  Second, go to position $p_i$: If this position is available, take the smallest entry from the bin, place it here, and move to position $p_{i}+1$; if position $p_i$ is unavailable, just move to position $p_i+1$.  Repeat the second sub-step, starting at position $p_i+1$.  Finish when the bin is empty.}

\smallskip

\item[(s+1)] Fill the remaining available positions with the unused positive numbers, in increasing order.
\end{enumerate}

\begin{definition}
A signed permutation $w\in W_n$ is \define{vexillary} if $w=w(\triple)$ for some triple $\triple=(\bk,\bp,\bq)$.  By convention, the ``empty'' triple is a triple, so the identity element is vexillary.
\end{definition}

A triple also defines a strict partition $\lambda(\triple)$, by setting $\lambda_{k_i} = p_i+q_i-1$, and defining the other parts minimally so that the result is a strictly decreasing sequence of integers.  (In formulas, $\lambda_k = p_i+q_i-1+k_i-k$ if $k_{i-1}< k \leq k_i$.)

For example, with $\triple = ( \,1\;3\;4\;5\;8\, ,\, 9\;9\;6\;4\;3\, ,\, 12\;9\;8\;8\;5\,)$, the six steps in forming $w(\triple)$ produce
\begin{align*}
  w &= \cdot\;\cdot\;\cdot\;\cdot\;\cdot\;\cdot\;\cdot\;\cdot\;{\bf\bar{12}}, \\
  w &= \cdot\;\cdot\;\cdot\;\cdot\;\cdot\;\cdot\;\cdot\;\cdot\;\bar{12}\;{\bf\bar{10}\;\bar{9}}, \\
  w &= \cdot\;\cdot\;\cdot\;\cdot\;\cdot\;{\bf\bar{8}}\;\cdot\;\cdot\;\bar{12}\;\bar{10}\;\bar{9}, \\
  w &= \cdot\;\cdot\;\cdot\;{\bf\bar{11}}\;\cdot\;\bar{8}\;\cdot\;\cdot\;\bar{12}\;\bar{10}\;\bar{9}, \\
  w &= \cdot\;\cdot\;{\bf\bar{7}}\;\bar{11}\;{\bf\bar{6}}\;\bar{8}\;{\bf\bar{5}}\;\cdot\;\bar{12}\;\bar{10}\;\bar{9}, \\
  w &= {\bf 1}\;{\bf 2}\;\bar{7}\;\bar{11}\;\bar{6}\;\bar{8}\;\bar{5}\;{\bf 3}\;\bar{12}\;\bar{10}\;\bar{9}\;{\bf 4}.
\end{align*}
The corresponding partition is $\lambda = (20, 18, 17, 13, 11, 9, 8, 7)$.

In the rest of the article, we study some of the properties of this construction.  Equivalent triples produce the same signed permutation and the same strict permutation, so we will generally assume triples are essential.

\begin{lemma}\label{l.descents}
Let $w=w(\triple)$, for an essential triple $\triple=(\bk,\bp,\bq)$.  The descents of $w$ are at the positions $p_i-1$.  In fact, for each $i$, we have $w(p_i-1)>\bar{q_i}\geq w(p_i)$, and there are no other descents.
\end{lemma}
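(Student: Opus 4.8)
The plan is to read everything off the rank function $r_w(p,q)=\#\{m\ge p : w(m)\le \bar{q}\}$, after first recording the elementary features of the construction. In steps $(1)$--$(s)$, block $i$ deposits $k_i-k_{i-1}$ negative entries (with $k_0:=0$), in increasing order from left to right, into the leftmost still-free positions that are $\ge p_i$; all of these entries are $\le\bar{q_i}$, and step $(s+1)$ fills the leftover positions with the unused positive numbers in increasing order. Two observations are immediate. First, since $p_1\ge\cdots\ge p_s$, none of the blocks $1,\dots,i$ ever occupies a position to the left of $p_i$; hence positions $<p_i$ are filled only by later blocks or by step $(s+1)$. Second, position $p_i$ itself is filled (by block $i$ if it is free when step $i$ runs, otherwise by an earlier block) with a negative value that is $\le\bar{q_j}\le\bar{q_i}$ for some $j\le i$, using $q_1\ge\cdots\ge q_i$. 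This already gives $w(p_i)\le\bar{q_i}$.

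The two substantive claims, $w(p_i-1)>\bar{q_i}$ and the absence of other descents, I would obtain from the formula
\[
  r_w(p,q)=\max_{1\le j\le s}\bigl\{\,k_j-(p-p_j)_+-(q-q_j)_+\,\bigr\},
\]
where $(x)_+=\max(0,x)$. The inequality ``$\ge$'' is the $1$-Lipschitz bound satisfied by any rank function applied to the feasible conditions $r_w(p_j,q_j)\ge k_j$; the reverse inequality is what the greedy (minimal) placement is designed to achieve, and it is the heart of the matter. Granting the formula, telescope the defining inequalities to $(p_i-p_j)+(q_i-q_j)>k_j-k_i$ for all $j>i$ (strict because the triple is essential), and note $p_j\ge p_i$, $q_j\ge q_i$ for $j<i$. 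Comparing the $s$ competing terms then shows the maximum defining $r_w(p_i-1,q_i)$, $r_w(p_i,q_i)$, and $r_w(p_i+1,q_i)$ is attained at $j=i$ and equals $k_i$, $k_i$, and $k_i-1$ respectively. Since $r_w(b,q)-r_w(b+1,q)=[\,w(b)\le\bar{q}\,]$, the equality $r_w(p_i-1,q_i)=r_w(p_i,q_i)$ forces $w(p_i-1)>\bar{q_i}$, while $r_w(p_i,q_i)>r_w(p_i+1,q_i)$ recovers $w(p_i)\le\bar{q_i}$. (When $p_i=1$ this reads as a descent at $0$, i.e.\ $w(1)<0$, which is handled identically.)

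For ``no other descents'' I would use the same identity to convert descents into a concavity statement: since the consecutive differences $r_w(b,q)-r_w(b+1,q)$ lie in $\{0,1\}$, the permutation $w$ has a descent at a position $b\ge 1$ if and only if $r_w(b,q)+r_w(b+2,q)<2\,r_w(b+1,q)$ for some $q\ge 1$ (equivalently $[\,w(b)\le\bar q\,]=0$ but $[\,w(b+1)\le\bar q\,]=1$). Feeding the max-formula into this second difference, each term $k_j-(p-p_j)_+-(q-q_j)_+$ is flat in $p$ for $p\le p_j$ and has slope $-1$ for $p>p_j$; a short computation with the feasibility inequalities shows that the upper envelope is strictly concave at $b$ (for some $q$) precisely when $b+1\in\{p_1,\dots,p_s\}$, and never otherwise. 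Hence the descent set is exactly $\{p_i-1\}$.

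The main obstacle is the reverse inequality in the displayed formula, which is the genuinely combinatorial point. Equivalently, one must show that every entry of a later block $\ell>i$ whose value is $\le\bar{q_i}$ (the most negative, ``deep'' entries, which go to the leftmost used positions of block $\ell$) is placed strictly to the left of $p_i-1$. I would argue this by counting, for each block $\ell$, the number of positions in $[p_\ell,p_i-1]$ still free when step $\ell$ runs---bounded below by $(p_i-p_\ell)-(k_{\ell-1}-k_i)$, since only blocks $i{+}1,\dots,\ell{-}1$ can occupy that range---and comparing it against how many deep entries block $\ell$ can carry, which is controlled by the telescoped strict inequality $(p_i-p_\ell)+(q_i-q_\ell)>k_\ell-k_i$. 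The delicate part is that the intermediate blocks must be accounted for simultaneously: an entry of a block between $i$ and $\ell$ cannot be both at a position $<p_i$ and of magnitude $<q_i$, so the two running counts never overlap, and it is exactly here that the strictness (essentiality) of the triple is used.
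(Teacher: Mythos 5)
Your strategy is genuinely different from the paper's, and it could in principle work, but as written it has a real gap exactly where you say the ``heart of the matter'' is: the reverse inequality in the displayed formula $r_w(p,q)=\max_j\{k_j-(p-p_j)_+-(q-q_j)_+\}$ is never proved, only sketched, and the sketch's ``delicate'' simultaneous bookkeeping of intermediate blocks is left undone. This is not a minor deferral: showing that the greedy construction achieves the lower envelope of the rank conditions is at least as hard as the lemma itself, and the natural route to it goes \emph{through} knowledge of the descent set. (The paper proceeds in the opposite order: it proves the lemma first, by a direct induction on the sub-steps of the construction, distinguishing the cases $p<p_{i-1}-1$, $p=p_{i-1}-1$, $p=p_{i-1}$ and using the essential-triple inequality to show exactly which placements create descents; the rank-function and essential-set descriptions, Lemma~\ref{l.ess-set} and Theorem~\ref{t.vex-characterization}, are then consequences.) Granting your formula, the local analysis is correct --- the comparison of competing terms at $(p_i-1,q_i)$, $(p_i,q_i)$, $(p_i+1,q_i)$ via the telescoped strict inequalities does give values $k_i,k_i,k_i-1$, hence $w(p_i-1)>\bar{q_i}\geq w(p_i)$ --- and your direct observation that $w(p_i)\leq\bar{q_i}$ follows from the construction is fine; but without a proof of the formula the proposal is incomplete. (Also, as displayed the formula needs truncation at $0$, since the maximum is negative for large $p,q$ while $r_w\geq 0$; this does not affect your use of it near the essential positions.)

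There is a second, smaller but genuine error in the ``no other descents'' step: the claimed equivalence ``$w$ has a descent at $b\geq 1$ iff $r_w(b,q)+r_w(b+2,q)<2\,r_w(b+1,q)$ for some $q\geq 1$'' is false. Since $r_w(b,q)-r_w(b+1,q)=[\,w(b)\leq\bar{q}\,]$ with $q\geq 1$, strict concavity detects only descents whose \emph{right} entry is negative; a descent $w(b)>w(b+1)>0$ is invisible to $r_w(\cdot,q)$ for every $q\geq 1$. You must rule such descents out separately --- this is easy, because step $(s+1)$ places all positive entries in increasing order from left to right, so no two positive values of $w(\triple)$ are ever inverted --- but as stated the concavity criterion alone does not deliver the full claim. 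With that observation added, and with a complete proof of the envelope formula, your argument would close; until then, the paper's step-by-step induction is the only complete proof on the table.
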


\begin{proof}
In Step (1), no descents are created, unless $p_1=1$, in which case the permutation has a single descent at $0$.

Now for $1<i\leq s$, consider the situation before Step (i) in constructing $w(\triple)$.  Assume inductively that for $j<i$, there is a descent at position $p_j-1$ whenever this position has been filled, satisfying $w(p_i-1)>\bar{q_i}\geq w(p_i)$, and there are no other descents.

In carrying out Step (i), we place negative entries in consecutive vacant positions, from left to right, starting at position $p_i$ (or the next vacant position to the right, if $p_i=p_{i-1}$).  We consider ``sub-steps'' of Step (i), where we are placing an entry at position $p\geq p_i$, and distinguish three cases.

First, suppose we are at position $p$, with $p<p_{i-1}-1$.  In this case, the previous entry placed in Step (i) (if any) was placed at position $p-1$, so we did not create a descent at $p-1$.  Position $p+1$ is still vacant, so no new descents are created.

To illustrate, take $\triple = ( \,1\;3\;4\;5\;8\, ,\, 9\;9\;6\;4\;3\, ,\, 12\;9\;8\;8\;5\,)$, as in the example above.  In Step (3), we place $\bar{8}$ without creating a descent:
\[
  w = \cdot\;\cdot\;\cdot\;\cdot\;\cdot\;{\bf\bar{8}}\;\cdot\;\cdot\;\bar{12}\;\bar{10}\;\bar{9}.
\]

Next, suppose we are at position $p=p_{i-1}-1$.  This means $p_{i-1}-p_{i}\leq k_{i}-k_{i-1}$, so let $\beta = (k_{i}-k_{i-1})-(p_{i-1}-p_i)$ be the number of entries remaining to be placed in Step (i), after placing the current one.  The inequality \eqref{e.triple} in the triple condition implies $q_i+\beta<q_{i-1}$, which in turn means that the entries used in previous steps are all strictly less than $\bar{q_i+\beta}$; therefore the $\beta+1$ entries $\bar{q_i+\beta},\bar{q_i+\beta-1},\ldots,\bar{q_i}$ are all available to be placed.  It follows that $w(p_{i-1}-1) = \bar{q_i+\beta}$, which creates a descent satisfying the claim of the lemma.  Again, we did not create a descent at position $p-1$, for the same reason as in the previous case.

Continuing the running example, the first entry placed in Step (5) is the $\bar{7}$, creating a descent at position 3:
\[
  w = \cdot\;\cdot\;{\bf\bar{7}}\;\bar{11}\;{\cdot}\;\bar{8}\;{\cdot}\;\cdot\;\bar{12}\;\bar{10}\;\bar{9}.
\]
(In this case, we had $q_5=5$ and $\beta = 2$.)

Finally, suppose we are at position $p=p_{i-1}$.  Set $\beta = (k_i-k_{i-1})-(p_{i-1}-p_i) -1$, so $\beta \geq 0$; this is the number of entries to be placed after the current one.  Now the essential triple condition implies $q_i+ \beta + 1 < q_{i-1}$.  The entries that will be placed are therefore $-q_i-\beta,-q_i-\beta+1,\ldots,-q_i$.  These are all greater than the ones that already occupy positions to the right of $p_{i-1}$.  Whenever such an entry is placed in a vacant position to the right of a filled position, then, it does not create a descent at that filled position.  Whenever it is placed to the left of a filled position, it does create a descent at the position it is placed.  The latter can only happen at positions $p_j-1$, for $j<i-1$.

In Step (5) of our example, after placing the $\bar{7}$ in position $p_5=p_4-1$, we come to position $p_4$.  The entries to be placed are $\bar{6},\bar{5}$.  In placing the $\bar{6}$ in the next vacant position---position $5$---we do not create a descent at the filled position to its left, but we do create a descent at position $5$, since position $6$ is already filled:
\[
  w = \cdot\;\cdot\;{\bar{7}}\;\bar{11}\;{\bf\bar{6}}\;\bar{8}\;{\bf\bar{5}}\;\cdot\;\bar{12}\;\bar{10}\;\bar{9}.
\]
In placing the $\bar{5}$, there is no descent created, since the position to its right is vacant.

At Step (s+1), the same reasoning as in the last case considered shows that descents are created at those $p_j-1$ which are still vacant, and nowhere else.
\end{proof}

Given a triple $\triple=(\bk,\bp,\bq)$, the \define{dual triple} is $\triple^*=(\bk,\bq,\bp)$.

\begin{lemma}\label{l.inverse}
We have $w(\triple^*)=w(\triple)^{-1}$.
\end{lemma}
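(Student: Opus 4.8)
The plan is to reduce the statement to the rank-function characterization of $w(\triple)$ recalled in the Introduction, using the observation that inverting a signed permutation interchanges the two arguments of the rank function. The one fact doing all the work is the identity
\[
  r_{w^{-1}}(q,p) = r_w(p,q) \qquad \text{for all } w\in W_n \text{ and all } p,q,
\]
which is immediate from the two equivalent forms of the rank function given in \S\ref{s.diagrams}. Indeed, writing $r_{w^{-1}}(q,p)=\#\{i\leq\bar q : w^{-1}(i)\geq p\}$ in the second form and substituting $i=w(j)$ turns the count into $\#\{j\geq p : w(j)\leq\bar q\}$, which is exactly the first expression for $r_w(p,q)$. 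So I would establish this identity first; it is a one-line change of variables.

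Next I would invoke the defining property of $w(\triple)$ stated in the Introduction: $w(\triple)$ is the Bruhat-minimal element of $W_n$ satisfying $r_{w(\triple)}(p_i,q_i)\geq k_i$ for $1\leq i\leq s$. Here I record two elementary observations about the dual triple $\triple^*=(\bk,\bq,\bp)$: it is again a valid triple, because the inequality~\eqref{e.triple} is symmetric in $\bp$ and $\bq$ (and the monotonicity of $\bp,\bq$ is unaffected), and moreover $(\triple^*)^*=\triple$. Consequently $w(\triple^*)$ is the Bruhat-minimal element of $W_n$ satisfying $r_{w(\triple^*)}(q_i,p_i)\geq k_i$.

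The argument then runs as follows. Since $w(\triple)$ satisfies $r_{w(\triple)}(p_i,q_i)\geq k_i$ for all $i$, the displayed identity gives $r_{w(\triple)^{-1}}(q_i,p_i)=r_{w(\triple)}(p_i,q_i)\geq k_i$; thus $w(\triple)^{-1}$ meets the rank conditions defining $w(\triple^*)$, and minimality of $w(\triple^*)$ forces $w(\triple^*)\leq w(\triple)^{-1}$. Applying the same reasoning with $\triple$ replaced by $\triple^*$, and using $(\triple^*)^*=\triple$, yields $w(\triple)\leq w(\triple^*)^{-1}$. Because Bruhat order on $W_n$ is preserved under inversion (inversion is an anti-automorphism of the group but an order automorphism of the poset), the latter inequality is equivalent to $w(\triple)^{-1}\leq w(\triple^*)$. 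The two inequalities together give $w(\triple^*)=w(\triple)^{-1}$ by antisymmetry.

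The only genuinely delicate input is that $w(\triple)$ is characterized as the unique Bruhat-minimal solution of its rank conditions; I would lean on the Introduction and on \cite{af1,a-diag} for this rather than re-deriving it, since everything else in the proof is formal. An alternative, more self-contained route would be to read the identity directly off the construction of \S\ref{s.triples}, tracking how passing to $w^{-1}$ transposes the permutation matrix of $\iota(w)$ and thereby swaps the roles of the position data $\bp$ and the value data $\bq$; this is more explicit but combinatorially heavier, so I would prefer the rank-function argument above.
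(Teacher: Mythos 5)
Your proof is correct, but it takes a genuinely different route from the paper's. The paper argues directly from the algorithm of \S\ref{s.triples}: viewing $\iota(w)$ as a bijection of $\{\bar{n},\ldots,0,\ldots,n\}$, it partitions $\{1,\ldots,n\}$ into blocks $a(1),\ldots,a(s+1)$ determined by $\bk$ and $\bp$ (the positions filled at each step) and blocks $b(1),\ldots,b(s+1)$ determined in the same way by $\bk$ and $\bq$ (the values used), observes that $\iota(w)$ carries $a(i)$ to $\bar{b(i)}$ for $1\leq i\leq s$ and $a(s+1)$ to $b(s+1)$, and concludes that inverting simply swaps the roles of $\bp$ and $\bq$. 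That argument is self-contained, using nothing beyond the construction. Yours instead combines the symmetry $r_{w^{-1}}(q,p)=r_w(p,q)$ (your change of variables is right), the fact that inversion is an automorphism of Bruhat order, and the characterization of $w(\triple)$ as the Bruhat-\emph{least} element satisfying $r_w(p_i,q_i)\geq k_i$ for all $i$; granted that characterization, the two-inequality sandwich is airtight, and you correctly note that $\triple^*$ is again a triple because \eqref{e.triple} is symmetric in $\bp$ and $\bq$, with $(\triple^*)^*=\triple$. What your route buys is brevity and a conceptual explanation (duality of rank conditions); what the paper's buys is independence from any minimality input.

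One caution about sourcing that input, which you rightly flag as the delicate point. Your argument needs $w(\triple)$ to be the unique minimum of the set of solutions, not merely a minimal element, and within this paper that fact is only asserted in the Introduction. The internal route to it would be Lemma~\ref{l.ess-set} (identifying $\Ess(w(\triple))$ with the triple's data) together with \cite[Theorem~2.3]{a-diag} --- but the proof of Lemma~\ref{l.ess-set} invokes Lemma~\ref{l.inverse} itself, so deriving the characterization that way would be circular. You must therefore take the characterization from \cite{af1}, where it is established independently of this lemma (as your citation suggests); alternatively, the inclusion $r_{w(\triple)}(p_i,q_i)\geq k_i$ can be read off the construction directly (it is the observation in part (2) of Lemma~\ref{l.ess-set}, whose proof does not use inverses), leaving only the least-element property to be imported. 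With the citation pinned down in this way, your proof stands.
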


\begin{proof}
Suppose $w\in W_n$, and consider $\iota(w)$ as a bijective map from the set $\{\bar{n},\ldots,0,\ldots,n\}$ to itself.  To determine $\iota(w)$, it is enough to know its values on any set of integers whose absolute values are $\{1,\ldots,n\}$.  The construction of $w$ translates as follows.  Let $a(1)=\{p_1,p_1+1,\ldots,p_1+k_1-1\}$, let $a(2)$ be the set of $k_2-k_1$ consecutive integers in $\{1,\ldots,n\}\setminus a(1)$ starting from the $p_2$th element, and define sets $a(3),\ldots,a(s)$ similarly.  Let $a(s+1)$ be the complement of $a(1),\ldots,a(s)$, so the sets $a(1),\ldots,a(s+1)$ partition $\{1,\ldots,n\}$.  Define $b(1),\ldots,b(s+1)$ in the same way, using $\bq$ in place of $\bp$.  Also write $\bar{a(i)}$ and $\bar{b(i)}$ for the corresponding sets of negative integers.  Now $\iota(w)$ maps $a(i)$ to $\bar{b(i)}$ for $1\leq i\leq s$, and it maps $a(s+1)$ to $b(s+1)$.  It follows that $\iota(w)^{-1}$ maps $\bar{b(i)}$ to $a(i)$, or equivalently, $b(i)$ to $\bar{a(i)}$, and maps $b(s+1)$ to $a(s+1)$.  In other words, the inverse is obtained by switching the roles of $a$ and $b$, so the lemma is proved.
\end{proof}

\begin{lemma}\label{l.ess-set}
{\rm(1)}\; Let $w=w(\triple)$ be a vexillary signed permutation, for an essential triple $\triple=(\bk,\bp,\bq)$.  Then the SE corners of the diagram of $\iota(w)$ are the boxes $(q_i-1,\, \bar{p_i})$, together with their reflections $(\bar{q_i},\, p_i-1)$.  (In particular, no box $(a,b)$ with $a,b < 0$ occurs as a SE corner.)

\medskip
\noindent
{\rm (2)}\; $k_i$ is the number of dots strictly south and weakly west of the $i$th SE corner in the diagram (in position $(q_i-1,\, \bar{p_i})$).  
\end{lemma}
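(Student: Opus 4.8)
The plan is to read off the SE corners of the diagram of $v=\iota(w)$ from the descent data supplied by Lemma~\ref{l.descents}, using the corner characterization \eqref{e.descents1}--\eqref{e.descents2}. First I would translate descents from $w$ to $v$: since $\iota(w)(\bar\imath)=\bar{w(i)}$, a descent of $w$ at a position $p-1$ produces descents of $v$ at both $p-1$ and $\bar p$, so by Lemma~\ref{l.descents} the descents of $v$ occur exactly at the positions $\{p_i-1\}\cup\{\bar{p_i}\}$. Setting $w^*=w(\triple^*)$ for the dual triple $\triple^*=(\bk,\bq,\bp)$, Lemma~\ref{l.inverse} gives $v^{-1}=\iota(w^*)$, and applying Lemma~\ref{l.descents} to $\triple^*$ shows the descents of $v^{-1}$ occur exactly at $\{q_i-1\}\cup\{\bar{q_i}\}$. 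Since a SE corner $(a,b)$ forces a descent of $v$ at $b$ and a descent of $v^{-1}$ at $a$, every corner satisfies $b\in\{p_i-1\}\cup\{\bar{p_i}\}$ and $a\in\{q_i-1\}\cup\{\bar{q_i}\}$.

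I would then verify that each $(q_i-1,\bar{p_i})$ is a corner. Condition \eqref{e.descents1} with $(a,b)=(q_i-1,\bar{p_i})$ unwinds, via $v(\bar{p_i})=\bar{w(p_i)}$ and $v(\bar{p_i}+1)=\bar{w(p_i-1)}$, to precisely the sharp bounds $w(p_i-1)>\bar{q_i}\ge w(p_i)$ of Lemma~\ref{l.descents}; condition \eqref{e.descents2} unwinds, via $v^{-1}=\iota(w^*)$, to the dual bounds $w^*(q_i-1)>\bar{p_i}\ge w^*(q_i)$. The reflected boxes $(\bar{q_i},p_i-1)$ are then corners by Lemma~\ref{l.symmetric-essential}, which furthermore shows the whole set of corners is invariant under $(q-1,\bar p)\leftrightarrow(\bar q,p-1)$. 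Since this reflection interchanges the columns $b<0$ and $b\ge 0$, it suffices to determine the corners with $b<0$ and invoke the symmetry for the rest.

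It remains to discard the spurious index combinations. A corner with both coordinates $\ge 0$ must have the form $(q_i-1,p_j-1)$; then \eqref{e.descents1} forces $w(p_j-1)\ge q_i>0$ and \eqref{e.descents2} forces $w^{-1}(q_i-1)\ge p_j>0$, so the positions $p_j-1$ and $w^{-1}(q_i-1)$ both carry positive values. But the positive values of $w$ are inserted in increasing order during Step~(s+1), so $w$ is strictly increasing on precisely the positions where it is positive; from $p_j-1<p_j\le w^{-1}(q_i-1)$ this gives $w(p_j-1)<q_i-1$, contradicting $w(p_j-1)\ge q_i$. Hence no corner has both coordinates $\ge 0$, and by the reflection symmetry none has both coordinates $<0$, which is the parenthetical claim. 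The main obstacle is the final case: an off-diagonal corner $(q_i-1,\bar{p_j})$ with $p_i\ne p_j$. Here I would place the row $a=q_i-1$ inside the descent-gap $[\,\bar{w(p_j-1)},\,\bar{w(p_j)}-1\,]$ of $v$ at column $\bar{p_j}$ and simultaneously impose the gap condition \eqref{e.descents2} for $v^{-1}$ at $a$; I expect that the monotonicity $p_1\ge\cdots\ge p_s$ and $q_1\ge\cdots\ge q_s$, together with the strict inequalities \eqref{e.triple} of an essential triple, force these two gap conditions to be compatible only when $p_i=p_j$, that is, only when the box is already one of the diagonal corners. Carrying out this analysis, together with the bookkeeping when $p_i=p_{i+1}$ or $q_i=q_{i+1}$ makes descents share a row or column, is where the real work lies.

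For part~(2), the number of dots strictly south and weakly west of $(q_i-1,\bar{p_i})$ is $r_v(p_i,q_i)$ by \eqref{e.rank-typeA}. Because $v(\bar t)=\bar{w(t)}$, this equals $\#\{t\ge p_i : w(t)\le\bar{q_i}\}=r_w(p_i,q_i)$, so I would finish by checking $r_w(p_i,q_i)=k_i$ directly from the construction: the $k_i$ negative entries placed in Steps~(1) through~(i) all satisfy $w(t)\le\bar{q_j}\le\bar{q_i}$ at positions $t\ge p_j\ge p_i$ for $j\le i$, while the essential inequalities \eqref{e.triple} guarantee that any negative entry $\le\bar{q_i}$ created in a later step lands at a position $<p_i$; thus exactly $k_i$ dots are counted.
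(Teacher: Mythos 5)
Your route is the paper's route for everything the paper actually writes down: you locate the descents of $\iota(w)$ via Lemma~\ref{l.descents}, and those of $\iota(w)^{-1}$ via Lemma~\ref{l.inverse} together with Lemma~\ref{l.descents} applied to the dual triple; you verify the boxes $(q_i-1,\bar{p_i})$ against \eqref{e.descents1}--\eqref{e.descents2} exactly as the paper does; you get the reflected corners from Lemma~\ref{l.symmetric-essential}; and your part~(2) (translating the dot count to $r_w(p_i,q_i)$ and counting the $k_i$ entries $\leq\bar{q_i}$ placed at positions $\geq p_i$ through Step~(i)) is the paper's argument. Your exclusion of corners with both coordinates nonnegative, using the fact that the positive entries of $w(\triple)$ increase from left to right, is correct and is in fact more explicit than anything in the paper; with the reflection symmetry it also kills the both-negative case, i.e., the parenthetical claim.

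The genuine gap is that you do not prove the completeness half of part~(1) in the one case where it is nontrivial. The descent data only confines a corner in a negative column to a box of the form $(q_i-1,\bar{p_j})$; to finish, you must show that \eqref{e.descents1}--\eqref{e.descents2} at such a box force $(p_j,q_i)$ to be one of the pairs of the triple, and your proposal explicitly replaces this step with ``I expect that\dots''. Unwound, the two conditions say $w(p_j)\leq\bar{q_i}<w(p_j-1)$ together with $w^{-1}(\bar{q_i})\geq p_j$ while the value $\bar{q_i-1}$ sits at a position $<p_j$; ruling this out for pairs not in the triple requires an actual analysis of the construction---tracking which step places the value $\bar{q_i}$ and which step fills position $p_j$, and invoking the strict inequalities \eqref{e.triple}, much as in the case analysis proving Lemma~\ref{l.descents}. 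Note also that repetitions $p_i=p_{i+1}$ really do put several legitimate corners in a single column (see Figure~\ref{f.essential-ex1}), so the statement to prove is about pairs $(p_j,q_i)$, not indices, and the bookkeeping you defer is unavoidable. As written, then, you establish that the claimed boxes are corners and that no corner has two coordinates of the same sign, but the absence of further mixed-sign corners---precisely what Theorem~\ref{t.vex-characterization} (\ref{cond1})$\Rightarrow$(\ref{cond2}) needs from this lemma---is conjectured rather than proved. (In fairness, the published proof is also terse here, contenting itself with verifying that the claimed boxes satisfy the corner conditions; but having correctly identified this case as ``where the real work lies,'' you still have to do that work for the proposal to count as a proof.)
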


\begin{proof}
By Lemma~\ref{l.symmetric-essential}, it suffices to show that the first $s$ corners of the diagram of $\iota(w)$ (ordered SW to NE) are as claimed.  For $p>0$, a signed permutation $w$ has a descent at position $p-1$ iff $\iota(w)$ has descents at positions $p-1$ and $\bar{p}$.  By Lemma~\ref{l.descents}, the descents of $\iota(w)$ are at positions $p_i-1$ and $\bar{p_i}$, and the inequalities \eqref{e.descents1}
\[
  \iota(w)(\bar{p_i}) > q_i-1 \geq \iota(w)(\bar{p_i-1})
\]
follow from
\[
 w(p_i-1) \geq q_i-1 > w(p_i),
\]
using Lemma~\ref{l.descents} again.  

A similar argument establishes the inequalities \eqref{e.descents2}:
\[
  \iota(w)^{-1}(q_i-1) > \bar{p_i} \geq \iota(w)^{-1}(q_i)
\]
(Swap $\bp$ and $\bq$, and apply Lemma~\ref{l.descents} to $w^{-1}=w(\bk,\bq,\bp)$.)

Part (2) is easy from the construction: At the end of Step (i), all the entries to the right of $p_i$ are at most $\bar{q_i}$, and there are $k_i$ of them.  After Step (i), any entry placed to the right of $p_i$ is greater than $\bar{q_i}$.  
\end{proof}

\begin{theorem}\label{t.vex-characterization}
For $w\in W_n$, the following are equivalent:
\begin{enumerate}
\item The signed permutation $w$ is vexillary. \label{cond1}

\smallskip

\item The essential positions $(p_i,q_i)$ of $w$ can be ordered so that $p_1 \geq \cdots \geq p_s >0$ and $q_1 \geq \cdots \geq q_s >0$. \label{cond2}

\smallskip

\item The permutation $\iota(w)$ is vexillary (as an element of $S_{2n+1}$). \label{cond3}
\end{enumerate}
\end{theorem}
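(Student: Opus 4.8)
The plan is to establish the cycle $(1)\Rightarrow(2)\Rightarrow(1)$ together with $(1)\Rightarrow(3)\Rightarrow(2)$, which closes all three equivalences. The workhorse is the dictionary from \S\ref{s.diagrams} identifying the essential positions of a signed permutation $w$ with the SE corners of the diagram of $\iota(w)$ that lie in the first $n$ columns (after discarding the two exceptional families), combined with the classical type~A criterion (\cite{fulton,el}): a permutation $v\in S_N$ is vexillary (equivalently, $[2\;1\;4\;3]$-avoiding) if and only if its essential cells can be listed with weakly increasing rows and weakly decreasing columns, i.e.\ any two of them are comparable in the order for which the corners of a partition form a chain. Lemma~\ref{l.ess-set} is what makes this explicit for $w=w(\triple)$.

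For $(1)\Rightarrow(2)$, take $w=w(\triple)$ with $\triple$ essential. Lemma~\ref{l.ess-set}(1) shows that the SE corners of $\iota(w)$ in the first $n$ columns are exactly the boxes $(q_i-1,\bar{p_i})$, all with $p_i,q_i>0$. Neither exception in the definition of $\Ess(w)$ applies: the first requires $q<0$, and the second requires a partner corner with both coordinates negative, which Lemma~\ref{l.ess-set}(1) forbids. Hence $\Ess(w)=\{(k_i,p_i,q_i)\}$ with $k_i=r_w(p_i,q_i)$ by part~(2), and the monotonicity built into $\triple$ is precisely~(2). For $(1)\Rightarrow(3)$ I read off from Lemma~\ref{l.ess-set}(1) that, as $i$ grows, the first-column corners $(q_i-1,\bar{p_i})$ have weakly decreasing rows and weakly increasing columns, hence form a chain, while their reflections $(\bar{q_i},p_i-1)$ form a chain likewise; since $p_i,q_i>0$, every reflection has strictly smaller row and strictly larger column than every first-column corner, so the two chains concatenate into a single chain exhausting $\Ess(\iota(w))$. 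By the type~A criterion, $\iota(w)$ is vexillary.

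For $(2)\Rightarrow(1)$, put $k_i=r_w(p_i,q_i)$. Since $r_w(p,q)=\#\{i\le\bar{p}\mid w(i)\ge q\}$ is weakly decreasing in each variable, the $k_i$ are weakly increasing; they are strictly increasing because $\Ess(w)$ is minimal---if $k_i=k_{i+1}$, monotonicity of the rank function would make the condition at $(p_{i+1},q_{i+1})$ a consequence of the one at $(p_i,q_i)$, contradicting non-redundancy. The triple inequality~\eqref{e.triple} is the elementary bound $r_w(p',q')-r_w(p,q)\le(p-p')+(q-q')$ for $p\ge p'$ and $q\ge q'$: widening the index set adds at most $p-p'$ indices, and lowering the threshold from $q$ to $q'$ admits at most the $q-q'$ integers in $\{q',\dots,q-1\}$, as $w$ is injective. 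Thus $\triple=(\bk,\bp,\bq)$ is a triple. Finally, $w(\triple)$ is by construction the Bruhat-least signed permutation with $r(p_i,q_i)\ge k_i$ for all $i$, and $w$ is the Bruhat-least element realizing its essential set $\{(k_i,p_i,q_i)\}$; the rank conditions coincide, so $w=w(\triple)$.

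The subtle implication is $(3)\Rightarrow(2)$, and the crux is controlling corners above the middle row. If $\iota(w)$ is vexillary, its essential cells form a chain, so the first-column corners form a sub-chain; comparability of two such corners $(q-1,\bar{p})$ and $(q'-1,\bar{p'})$ in the chain order is exactly the co-monotonicity of $(p,q)$ and $(p',q')$, so after reindexing we may take $p_1\ge\cdots\ge p_s$ and $q_1\ge\cdots\ge q_s$, which is half of~(2). The remaining point---that every such essential position has $q>0$---is where the symmetry of Lemma~\ref{l.symmetric-essential} enters. If some essential position had $q\le0$ and $p>1$, the corner $(q-1,\bar{p})$ would have both coordinates negative, while its mirror $(\bar{q},p-1)$ (guaranteed by Lemma~\ref{l.symmetric-essential}) would have both coordinates nonnegative; a direct check shows this straddling pair is comparable in the product order but not in the chain order, contradicting vexillarity of $\iota(w)$. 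Hence $q>0$ throughout (the case $p=1$ being removed by the first exception), and (2) follows. I expect this last step---reconciling the signed essential set with the symmetric type~A essential set and verifying that the straddling pair genuinely breaks the chain---to be the main obstacle, since it is where the bookkeeping of \S\ref{s.diagrams} and the conventions for corners near the origin must be handled with care.
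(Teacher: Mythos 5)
Your proof is correct, and it reuses the paper's core toolkit---Lemma~\ref{l.ess-set}, the symmetry of Lemma~\ref{l.symmetric-essential}, Fulton's SW--NE corner criterion for type~A vexillarity, and the determination/minimality properties of essential sets from \cite{a-diag}---but it arranges the implications differently. The paper proves the single cycle \eqref{cond1}$\Rightarrow$\eqref{cond2}$\Rightarrow$\eqref{cond3}$\Rightarrow$\eqref{cond1}, concentrating all the work in \eqref{cond3}$\Rightarrow$\eqref{cond1}: it reads the triple $\triple$ off the negative-column SE corners of the diagram of $\iota(w)$, checks \eqref{e.triple} directly from corner-ness, and concludes $w=w(\triple)$ because a permutation is determined by its essential set (via Lemma~\ref{l.ess-set} applied to $w(\triple)$, everything staying at the level of $\iota$, which sidesteps the $\tilde{D}_w$ bookkeeping you rightly identify as delicate). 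You instead prove \eqref{cond2}$\Rightarrow$\eqref{cond1} directly, and your verification that $(\bk,\bp,\bq)$ built from $\Ess(w)$ is a triple---strict increase of the $k_i$ from non-redundancy, and \eqref{e.triple} from the elementary bound $r_w(p',q')-r_w(p,q)\le(p-p')+(q-q')$---is a nice self-contained argument that the paper leaves implicit. Your \eqref{cond3}$\Rightarrow$\eqref{cond2} straddling-pair argument is exactly the step the paper compresses into the single clause ``Lemma~\ref{l.symmetric-essential} implies $a\geq 0$'': a corner with both coordinates negative and its mirror with both nonnegative are strictly NW/SE of one another, breaking the SW--NE chain; your explicit check is correct and, if anything, more transparent than the paper's.

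Two soft spots, both repairable. First, in \eqref{cond2}$\Rightarrow$\eqref{cond1} you identify $w$ with $w(\triple)$ by asserting that $w(\triple)$ is ``by construction'' Bruhat-least subject to the conditions $r(p_i,q_i)\ge k_i$. Within this paper, $w(\triple)$ is \emph{defined} by the placement algorithm; the minimality property is only asserted in the introduction (with provenance in \cite{af1}), not proved here. The in-paper route is: equality in \eqref{e.triple} would make one essential rank condition imply another, contradicting the minimality of $\Ess(w)$ (\cite[Theorem~2.3]{a-diag}), so your $\triple$ is automatically \emph{essential}; then Lemma~\ref{l.ess-set} gives $\Ess(w(\triple))=\{(k_i,p_i,q_i)\}=\Ess(w)$, and $w=w(\triple)$ follows since a signed permutation is determined by its essential set. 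Second, your \eqref{cond3}$\Rightarrow$\eqref{cond2} presumes the identification of the essential positions of $w$ (defined via SE corners of $\tilde{D}_w$, minus the two exceptions) with the negative-column essential boxes of $\iota(w)$; you flag this as ``the main obstacle'' rather than carrying it out. It does go through---corners in columns $\le\bar{2}$ of $\tilde{D}_w$ coincide with corners of the diagram of $\iota(w)$, column-$\bar{1}$ artifacts above the middle row are exactly what the first exception removes, and under vexillarity of $\iota(w)$ your straddling argument shows no corner has both coordinates negative, so the second exception is vacuous---but this verification should be written out, since it is the one place where your reorganization incurs bookkeeping the paper's \eqref{cond3}$\Rightarrow$\eqref{cond1} route avoids.
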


\begin{proof}
We will show \eqref{cond1} $\Rightarrow$ \eqref{cond2} $\Rightarrow$ \eqref{cond3} $\Rightarrow$ \eqref{cond1}.  The implication \eqref{cond1} $\Rightarrow$ \eqref{cond2} is immediate from Lemma~\ref{l.ess-set}, and \eqref{cond2} $\Rightarrow$ \eqref{cond3} follows from Lemma~\ref{l.symmetric-essential}.  It remains to show \eqref{cond3} $\Rightarrow$ \eqref{cond1}.

Suppose $\iota(w)$ is vexillary, and recall that this is equivalent to requiring that the SE corners of its diagram proceed from southwest to northeast \cite[Remark~9.17]{fulton}.  Take those SE corners $(a,b)$ such that $b<0$; Lemma~\ref{l.symmetric-essential} implies $a\geq 0$.  Let $s=\lceil \frac{\#\Ess(\iota(w))}{2} \rceil$ be the number of such boxes.  Reading from SW to NE, label them $(a_i,b_i)$, with $1\leq i\leq s$.  Set $p_i=-b_i$ and $q_i=a_i+1$, and let $k_i$ be the number of dots strictly south and weakly west of $(a_i,b_i)$.

We claim that $\triple=(\bk,\bp,\bq)$ is an essential triple.  
Indeed, looking at the extended diagram $\tilde{D}_w$, the fact that the boxes $(q_i-1,\bar{p_i})$ are SE corners implies the inequalities \eqref{e.triple}.  
It follows that $w$ is equal to $w(\triple)$, since a permutation is determined by its essential set \cite[Lemma~3.10(b)]{fulton}, \cite[Theorem~2.3]{a-diag}.  
\end{proof}

\section{Pattern avoidance}\label{s.patterns}

Given a permutation $\pi$ in $S_m$, a permutation $v$ {\em contains the pattern $\pi$} if, when written in one-line notation, there is an $m$-element subsequence of $v$ which is in the same relative order as $\pi$; otherwise $v$ {\em avoids} $\pi$.  There is a similar notion for signed permutations and signed patterns.  Given a signed pattern $\pi = \pi(1)\;\pi(2)\cdots\pi(m)$ in $W_m$, a signed permutation $w$ {\em contains} $\pi$ if there is a subsequence $w(i_1)\cdots w(i_m)$ such that the signs of $w(i_j)$ and $\pi(j)$ are the same for all $j$, and also the absolute values of the subsequence are in the same relative order as the absolute values of $\pi$.  Otherwise $w$ {\em avoids} $\pi$.  (See, e.g., \cite[Definition 6]{bl}.)

For example, $\bar{5}\;1\;3\;\bar{2}\;4$ contains the pattern $[\bar{3}\;2\;\bar{1}]$, as the subsequence $\bar{5}\;3\;\bar{2}$, but $\bar{5}\;1\;2\;\bar{3}\;\bar{4}$ avoids $[\bar{3}\;2\;\bar{1}]$.

\begin{proposition}\label{p.patterns}
A signed permutation $w$ is vexillary if and only if $w$ avoids the signed patterns $[2\;1]$, $[\bar{3}\;2\;\bar{1}]$, $[\bar{4}\;\bar{1}\;\bar{2}\;3]$, $[\bar{4}\;1\;\bar{2}\;3]$, $[\bar{3}\;\bar{4}\;\bar{1}\;\bar{2}]$, $[\bar{3}\;\bar{4}\;1\;\bar{2}]$, $[\bar{2}\;\bar{3}\;4\;\bar{1}]$, $[2\;\bar{3}\;4\;\bar{1}]$, and $[3\;\bar{4}\;\bar{1}\;\bar{2}]$. \end{proposition}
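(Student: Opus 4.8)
The plan is to convert the statement into ordinary pattern avoidance for $\iota(w)$ and then into a finite check on signed permutations of length at most four. By Theorem~\ref{t.vex-characterization}, $w$ is vexillary if and only if $\iota(w)$ is vexillary in $S_{2n+1}$, and by the Lascoux--Sch\"utzenberger criterion recalled in the introduction (see also \cite[Remark~9.17]{fulton}) this holds if and only if $\iota(w)$ avoids $[2\;1\;4\;3]$. Call a signed permutation \emph{bad} if applying $\iota$ to it yields a permutation containing $[2\;1\;4\;3]$; then $w$ is vexillary exactly when $w$ is not bad, and the proposition becomes the claim that $w$ is bad if and only if $w$ contains one of the nine listed patterns. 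The tool used throughout is a lifting principle: if $w$ contains the signed pattern $v$, then $\iota(w)$ contains $\iota(v)$. This is immediate from the definitions, since an occurrence of $v$ in $w$ at positive positions $j_1<\cdots<j_m$ selects the columns $\pm j_1,\ldots,\pm j_m$ and $0$ of $\iota(w)$, and the relative order of these $2m+1$ values matches that of the entries of $\iota(v)$ (signs and relative orders of absolute values are preserved, and $0$ sits between the negatives and positives in both). Because containment of $[2\;1\;4\;3]$ is transitive, it follows that badness is upward closed: if $w$ contains a bad pattern, then $w$ is itself bad.

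For the implication ``contains a listed pattern $\Rightarrow$ bad,'' it therefore suffices to verify that each of the nine patterns is bad. This is a finite computation: for each listed $\pi\in W_m$ one writes out $\iota(\pi)\in S_{2m+1}$ and exhibits an occurrence of $[2\;1\;4\;3]$. For example, $\iota([2\;1]) = \bar{1}\;\bar{2}\;0\;2\;1$ contains $[2\;1\;4\;3]$ in its four nonzero columns, while $\iota([\bar{4}\;\bar{1}\;\bar{2}\;3]) = \bar{3}\;2\;1\;4\;0\;\bar{4}\;\bar{1}\;\bar{2}\;3$ contains it in columns $\bar{3},\bar{2},\bar{1},4$, with values $2,1,4,3$. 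The other seven cases are analogous.

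For the converse, ``bad $\Rightarrow$ contains a listed pattern,'' I reduce to the case of length at most four. Suppose $\iota(w)$ contains $[2\;1\;4\;3]$ in columns $a<b<c<d$. Let $J = \{|a|,|b|,|c|,|d|\} \setminus \{0\}$ be the folded positive positions, put $r = |J| \leq 4$, and let $u\in W_r$ be the signed pattern that $w$ realizes on the positions $J$, so that $w$ contains $u$. I claim $u$ is bad. The map sending a column $i\neq 0$ of $\iota(w)$ to $\operatorname{sign}(i)$ times the rank of $|i|$ in $J$, and $0$ to $0$, carries $\{a,b,c,d\}$ \emph{injectively} into the columns of $\iota(u)$---injectivity because the map preserves both sign and folded position, so even a pair $i,\bar\imath$ in the occurrence maps to the two distinct columns $\pm t$---and it preserves the relative order of the four values, since passing from $w$ to $u$ preserves signs and relative orders of absolute values. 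Hence the four images realize $[2\;1\;4\;3]$ in $\iota(u)$, so $u$ is bad with $u\in W_r$, $r\leq 4$. It then remains to establish the finite statement that \emph{every bad signed permutation of length at most four contains one of the nine listed patterns}; together with $w\supseteq u$ and transitivity of containment, this completes the argument.

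The heart of the proof, and its main obstacle, is this last finite classification. One organizes it by first setting aside any $u$ containing $[2\;1]$ or $[\bar{3}\;2\;\bar{1}]$ (which are themselves on the list), and then enumerating the bad elements of $W_{\leq 4}$ that avoid both of these short patterns; the assertion is that these are exactly the seven length-four patterns in the statement. The enumeration is finite but delicate, and is cut down by two symmetries: the pattern $[2\;1\;4\;3]$ is its own reverse-complement, and the matrix of $\iota(u)$ is centrally symmetric, i.e.\ $\iota(u)(\bar\imath) = \bar{\iota(u)(i)}$ (the essential-set incarnation of which is Lemma~\ref{l.symmetric-essential}); together these normalize the occurring $[2\;1\;4\;3]$ and sharply limit the sign configurations to be examined. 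Finally, that each of the seven is genuinely minimal bad---that it avoids $[2\;1]$ and $[\bar{3}\;2\;\bar{1}]$---is a short check of the type illustrated by $[\bar{4}\;\bar{1}\;\bar{2}\;3]$, whose unique positive entry is in last position.
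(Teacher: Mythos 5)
Your reduction framework is sound and matches the paper's up to the last step: like the paper, you translate the proposition via Theorem~\ref{t.vex-characterization} into the statement that $\iota(w)$ contains $[2\;1\;4\;3]$ exactly when $w$ contains one of the nine signed patterns, and your lifting principle for the easy direction is the same observation the paper uses (each listed pattern, embedded by $\iota$, contains $[2\;1\;4\;3]$). Your folding lemma for the converse is also correct, and stated with more care than the paper makes explicit: the map $i\mapsto \operatorname{sign}(i)\cdot\mathrm{rank}_J(|i|)$ does stay injective when the occurrence uses a pair $i,\bar\imath$ or the column $0$, and it preserves relative order, so every bad $w$ contains a bad $u\in W_r$ with $r\leq 4$. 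This cleanly isolates a finite problem.

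The gap is that you never solve that finite problem. The assertion that every bad element of $W_{\leq 4}$ avoiding $[2\;1]$ and $[\bar{3}\;2\;\bar{1}]$ is one of the seven listed four-letter patterns is, after your reduction, logically equivalent to the proposition itself for $n\leq 4$; calling it ``delicate'' and gesturing at two symmetries does not discharge it, so as written the argument is circular at its crux. This classification is precisely where the paper's proof spends all of its effort: it organizes the occurrences of $[2\;1\;4\;3]$ in $\iota(w)$ by which of the four columns $a<b<c<d$ are negative and which of the four values are negative, and in each configuration (e.g.\ a subsequence $\bar{\beta}\;\bar{\gamma}\;x\;\bar{\alpha}$ with $\alpha<\beta<\gamma$, split into the subcases $x>\gamma$, $x<\alpha$, $\alpha<x<\gamma$) exhibits one of the nine patterns explicitly, using the central symmetry of $\iota(w)$ to halve the case list. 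To complete your version you would need either to carry out an equivalent case analysis, or to perform (and record) the exhaustive check over the $2+8+48+384$ elements of $W_1,\ldots,W_4$ --- including the verification, which you also omit, that no bad element of $W_{\leq 3}$ avoids both short patterns. Note finally that your plan buys no real economy over the paper's route: the paper's sign-configuration analysis is exactly the by-hand execution of the finite check your reduction calls for.
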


\begin{proof}
We will use the characterization of Theorem~\ref{t.vex-characterization}, and show that a signed permutation $w$ avoids these nine patterns if and only if $\iota(w)$ is vexillary in $S_{2n+1}$.  Recall that a permutation is vexillary if and only if it avoids the pattern $[2\;1\;4\;3]$.  When embedded by $\iota$, the nine signed patterns listed in \eqref{cond2} all contain $[2\;1\;4\;3]$, so if $\iota(w)$ avoids $[2\;1\;4\;3]$, then $w$ must avoid these signed patterns; this proves the ``$\Rightarrow$'' direction.

For the ``$\Leftarrow$'' implication, we claim that if $\iota(w)$ contains the pattern $[2\;1\;4\;3]$, then $w$ contains one of the nine signed patterns listed in the Proposition.  To check this, we break the ways $[2\;1\;4\;3]$ can appear in $\iota(w)$ into cases, and observe that in each case, one of the listed patterns appears.

For keeping track of the cases, we introduce some temporary notation.  An instance of the pattern $[2\;1\;4\;3]$ is witnessed by $\iota(w)(b) < \iota(w)(a) < \iota(w)(d) < \iota(w)(c)$ for some $a<b<c<d$.  We record which of $a,b,c,d$ are negative by placing a vertical bar in the word $[2\;1\;4\;3]$, and which of the corresponding values of $\iota(w)$ are negative by placing underlines.  For example, if $w=2\;\bar{3}\;\bar{5}\;4\;\bar{1}$, so
\[
  \iota(w) = 1\;\bar{4}\;5\;3\;\bar{2}\;0\;2\;\bar{3}\;\bar{5}\;4\;\bar{1}, 
\]
then an instance of $[2\;1\;4\;3]$ occurs as $[\ul{2}\,|\,\ul{1}\;4\;\ul{3}]$ in the subsequence $\bar{4}\;\bar{5}\;4\;\bar{1}$ (among others), and another occurs as $[|\,\ul{2}\;\ul{1}\;4\;\ul{3}]$ in the subsequence $\bar{3}\;\bar{5}\;4\;\bar{1}$.

We now check all cases, organized by position of the vertical bar and underlines.

Any occurrence of $[|\,2\;1\;4\;3]$, $[|\,2\;\ul{1}\;4\;3]$, or $[|\,\ul{2}\;\ul{1}\;4\;3]$ contains the signed pattern $[2\;1]$.  Suppose $\alpha<\beta<\gamma$, so an occurrence of $[|\,\ul{2}\;\ul{1}\;4\;\ul{3}]$ is a subsequence $\bar{\beta}\;\bar{\gamma}\;x\;\bar{\alpha}$.  This is the signed pattern $[\bar{2}\;\bar{3}\;4\;\bar{1}]$ if $x>\gamma$, it is $[\bar{3}\;\bar{4}\;1\;\bar{2}]$ if $x<\alpha$, and it contains $[\bar{3}\;2\;\bar{1}]$ if $\alpha<x<\gamma$.  Finally, an occurrence of $[|\,\ul{2}\;\ul{1}\;\ul{4}\;\ul{3}]$ is the signed pattern $[\bar{3}\;\bar{4}\;\bar{1}\;\bar{2}]$.

The cases $[2\,|\,1\;4\;3]$, $[2\,|\,\ul{1}\;4\;3]$, and $[\ul{2}\,|\,\ul{1}\;4\;3]$ all contain the signed pattern $[2\;1]$, as before.  With $a<b<c$, the case $[\ul{2}\,|\,\ul{1}\;4\;\ul{3}]$ depends on the position of the ``$\ul{2}$''.  A subsequence $\beta\;\bar{\gamma}\;x\;\bar{\alpha}$ is the signed pattern $[2\;\bar{3}\;4\;\bar{1}]$ if $x>\gamma$; it is $[3\;\bar{4}\;1\;\bar{2}]$ if $x<\alpha$, and it contains $[\bar{3}\;2\;\bar{1}]$ if $\alpha<x<\gamma$.  The subsequences $\bar{\gamma}\;\beta\;x\;\bar{\alpha}$ and $\bar{\gamma}\;x\;\beta\;\bar{\alpha}$ contain $[\bar{3}\;2\;\bar{1}]$.  The subsequence $\bar{\gamma}\;x\;\bar{\alpha}\;\beta$ is the signed pattern $[\bar{4}\;1\;\bar{2}\;3]$ if $x<\alpha$, it contains $[\bar{3}\;2\;\bar{1}]$ if $\alpha<x<\beta$, and it contains $[2\;1]$ if $x>\beta$.  Similarly, depending on the position of the ``$\ul{2}$'',  the case $[\ul{2}\,|\,\ul{1}\;\ul{4}\;\ul{3}]$ contains $[3\;\bar{4}\;\bar{1}\;\bar{2}]$, $[\bar{3}\;2\;\bar{1}]$, or $[\bar{4}\;\bar{1}\;\bar{2}\;3]$.

All five cases $[2\;1\,|\,4\;3]$, $[2\;\ul{1}\,|\,4\;3]$, $[\ul{2}\;\ul{1}\,|\,4\;3]$, $[\ul{2}\;\ul{1}\,|\,4\;\ul{3}]$, and $[\ul{2}\;\ul{1}\,|\,\ul{4}\;\ul{3}]$ contain the signed pattern $[2\;1]$.

The remaining cases are symmetric to the ones checked above.
\end{proof}

\begin{remark}\label{r.even}
An identical analysis shows that $w$ avoids the same nine signed patterns if and only if $\iota'(w) \in S_{2n}$ avoids $[2\;1\;4\;3]$; therefore $w$ is vexillary if and only if $\iota'(w)$ is vexillary.
\end{remark}

The pattern avoidance criterion for ordinary permutations allowed J.~West to enumerate the vexillary permutations in $S_n$ \cite{west}.  His proof uses a bijection between permutations avoiding $[2\;1\;4\;3]$ (vexillary permutations) and those that avoid $[4\;3\;2\;1]$.  One might expect a similar enumeration of vexillary signed permutations.  In fact, for the even embedding $\iota'\colon W_n \hookrightarrow S_{2n}$ (which omits ``$w(0)=0$''), Egge showed that the number of $w\in W_n$ such that $\iota'(w)$ avoids $[4\;3\;2\;1]$ is equal to
\[
  V_n := \sum_{k=0}^n \binom{n}{k}^2 C_k,
\]
where $C_k = \frac{1}{k+1}\binom{2k}{k}$ is the $k$th Catalan number \cite{egge}.  Together with computer verification up to $n=7$, this suggests the conjecture that $V_n$ is also the number of vexillary signed permutations in $W_n$.  West's bijection does not preserve the subgroup of signed permutations, though, so the proof is not immediately clear.

\section{Labelled Young diagrams}\label{s.lyd}

There is an alternative way to encode vexillary signed permutations, useful for determining when right-multiplication by a simple reflection takes one vexillary signed permutation to another.  Identifying a strict partition $\lambda$ with its shifted Young diagram, a \define{labelled Young diagram} of shape $\lambda$ is an assignment of integers to the SE corners, weakly decreasing from top to bottom, such that the labels $m_1,\ldots,m_s$ in rows $k_1,\ldots,k_s$ satisfy $0\leq m_i<\lambda_{k_i}$ and $m_i-m_{i+1}\leq \lambda_{k_i}-\lambda_{k_{i+1}}$.  (Note that $\lambda_{k_i}-\lambda_{k_{i+1}}+1$ is the number of boxes in the rim-hook connecting the corners labelled $m_i$ and $m_{i+1}$.)

Labelled Young diagrams are in bijection with essential triples: given $\triple=(\bk,\bp,\bq)$, the shifted Young diagram for $\lambda(\triple)$ has SE corners in rows $k_1,\ldots,k_s$, and one forms a labelled Young diagram of this shape by placing integers $m_i=p_i-1$ in these corners.  Examples are shown in Figure~\ref{f.lyd}.

\begin{figure}
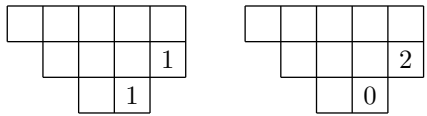


\pspicture(70,40)(-100,-20)

\psset{unit=1.35pt}

\psline{-}(-50,20)(0,20)
\psline{-}(-50,10)(0,10)
\psline{-}(-40,0)(0,0)
\psline{-}(-30,-10)(-10,-10)

\psline{-}(-50,20)(-50,10)
\psline{-}(-40,20)(-40,0)
\psline{-}(-30,20)(-30,-10)
\psline{-}(-20,20)(-20,-10)
\psline{-}(-10,20)(-10,-10)
\psline{-}(0,20)(0,0)

\rput(-5,5){\footnotesize{$1$}}
\rput(-15,-5){\footnotesize{$1$}}

\endpspicture
\pspicture(70,40)(-20,-20)

\psset{unit=1.35pt}

\psline{-}(-50,20)(0,20)
\psline{-}(-50,10)(0,10)
\psline{-}(-40,0)(0,0)
\psline{-}(-30,-10)(-10,-10)

\psline{-}(-50,20)(-50,10)
\psline{-}(-40,20)(-40,0)
\psline{-}(-30,20)(-30,-10)
\psline{-}(-20,20)(-20,-10)
\psline{-}(-10,20)(-10,-10)
\psline{-}(0,20)(0,0)

\rput(-5,5){\footnotesize{$2$}}
\rput(-15,-5){\footnotesize{$0$}}

\endpspicture

\caption{Labelled Young diagrams associated to $\triple = (\;2\;3,\;2\;2,\;3\;1\;)$ and $\triple^* = (\;2\;3,\;3\;1,\;2\;2\;)$.  The corresponding vexillary signed permutations are $2\;\bar{4}\;\bar{3}\;\bar{1}$ and $\bar{4}\;1\;\bar{3}\;\bar{2}$, respectively.\label{f.lyd}}
\end{figure}

Let $w=w(\triple)$, and let $Y$ be the corresponding labelled Young diagram of shape $\lambda(\triple)$.  By Lemma~\ref{l.descents}, the descents of $w$ are the corner labels of $Y$, i.e., $\ell(w s_m) = \ell(w)-1$ if and only if $m$ is a label.

A corner label $m$ in $Y$ is \define{removable} if
\begin{itemize}
\item it appears in $Y$ exactly once, and

\smallskip

\item when $m=0$, the row in which it appears contains a single box.

\end{itemize}
Given a removable label $m$, one can remove the box containing it to form a new labelled Young diagram $Y\setminus m$, whose corners are labelled according to the following four rules.
\begin{enumerate}
\item If a corner of $Y\setminus m$ is also a corner of $Y$, its label is the same.

\smallskip

\item If removing $m$ produces a new corner one box to the left, label that corner $m-1$.

\smallskip

\item If removing $m$ produces a new corner one box above, label that corner $m+1$.

\smallskip

\item If removing $m$ produces two new corners, label the one to the left $m-1$ and the one above $m+1$.
\end{enumerate}
These are the only possibilities for removing a corner from the Young diagram.  Examples are shown in Figure~\ref{f.removal}.

\begin{figure}
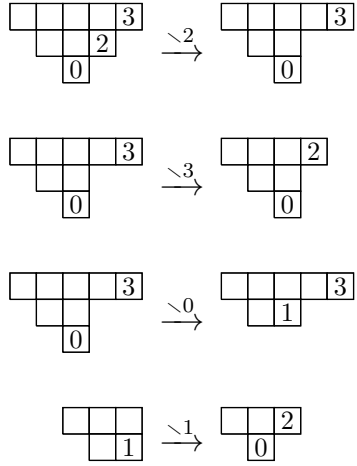


\pspicture(60,40)(-100,-10)

\psset{unit=1.00pt}

\whitebox(-50,10)
\whitebox(-40,10)
\whitebox(-30,10)
\whitebox(-20,10)
\whitebox(-10,10)

\whitebox(-40,0)
\whitebox(-30,0)
\whitebox(-20,0)

\whitebox(-30,-10)

\rput(-5,15){\footnotesize{$3$}}
\rput(-15,5){\footnotesize{$2$}}
\rput(-25,-5){\footnotesize{$0$}}

\rput(15,5){$\xrightarrow{\setminus 2}$}

\endpspicture
\pspicture(60,40)(-20,-10)

\psset{unit=1.00pt}

\whitebox(-50,10)
\whitebox(-40,10)
\whitebox(-30,10)
\whitebox(-20,10)
\whitebox(-10,10)

\whitebox(-40,0)
\whitebox(-30,0)

\whitebox(-30,-10)

\rput(-5,15){\footnotesize{$3$}}
\rput(-25,-5){\footnotesize{$0$}}

\endpspicture

\pspicture(60,40)(-100,-10)

\psset{unit=1.00pt}

\whitebox(-50,10)
\whitebox(-40,10)
\whitebox(-30,10)
\whitebox(-20,10)
\whitebox(-10,10)

\whitebox(-40,0)
\whitebox(-30,0)

\whitebox(-30,-10)

\rput(-5,15){\footnotesize{$3$}}
\rput(-25,-5){\footnotesize{$0$}}

\rput(15,5){$\xrightarrow{\setminus 3}$}

\endpspicture
\pspicture(60,40)(-20,-10)

\psset{unit=1.00pt}

\whitebox(-50,10)
\whitebox(-40,10)
\whitebox(-30,10)
\whitebox(-20,10)

\whitebox(-40,0)
\whitebox(-30,0)

\whitebox(-30,-10)

\rput(-15,15){\footnotesize{$2$}}
\rput(-25,-5){\footnotesize{$0$}}

\endpspicture

\pspicture(60,40)(-100,-10)

\psset{unit=1.00pt}

\whitebox(-50,10)
\whitebox(-40,10)
\whitebox(-30,10)
\whitebox(-20,10)
\whitebox(-10,10)

\whitebox(-40,0)
\whitebox(-30,0)

\whitebox(-30,-10)

\rput(-5,15){\footnotesize{$3$}}
\rput(-25,-5){\footnotesize{$0$}}

\rput(15,5){$\xrightarrow{\setminus 0}$}

\endpspicture
\pspicture(60,40)(-20,-10)

\psset{unit=1.00pt}

\whitebox(-50,10)
\whitebox(-40,10)
\whitebox(-30,10)
\whitebox(-20,10)
\whitebox(-10,10)

\whitebox(-40,0)
\whitebox(-30,0)


\rput(-5,15){\footnotesize{$3$}}
\rput(-25,5){\footnotesize{$1$}}

\endpspicture

\pspicture(60,40)(-100,0)

\psset{unit=1.00pt}

\whitebox(-30,10)
\whitebox(-20,10)
\whitebox(-10,10)

\whitebox(-20,0)
\whitebox(-10,0)


\rput(-5,5){\footnotesize{$1$}}

\rput(15,10){$\xrightarrow{\setminus 1}$}

\endpspicture
\pspicture(60,40)(-20,0)

\psset{unit=1.00pt}

\whitebox(-50,10)
\whitebox(-40,10)
\whitebox(-30,10)

\whitebox(-40,0)


\rput(-25,15){\footnotesize{$2$}}
\rput(-35,5){\footnotesize{$0$}}

\endpspicture

\caption{Removing corner labels. \label{f.removal}}
\end{figure}

If $w$ is the vexillary signed permutation corresponding to a labelled Young diagram $Y$, then $ws_m$ is vexillary when $m$ is removable: the reader may verify that $Y\setminus m$ is its corresponding labelled Young diagram.  The next theorem says the converse also holds.

\begin{theorem}
Let $Y$ be the labelled Young diagram corresponding to a vexillary signed permutation $w$.  Then $ws_m$ is vexillary of length $\ell(w)-1$ if and only if $m$ is a removable label of $Y$.
\end{theorem}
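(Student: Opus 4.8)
The plan is to reduce the length condition to a statement about descents, dispatch the (easier) ``if'' direction by an explicit computation, and concentrate on the ``only if'' direction via pattern containment. I would begin with the standard Coxeter-theoretic fact that $\ell(ws_m)=\ell(w)-1$ exactly when $m$ is a right descent of $w$ (for $m=0$ this reads $w(1)<0$). By Lemma~\ref{l.descents} the descents of $w=w(\triple)$ are precisely the positions $p_i-1$, which are exactly the corner labels $m_i$ of $Y$. Thus the length hypothesis holds if and only if $m$ is a corner label, and the theorem reduces to the assertion: for a corner label $m$, the element $ws_m$ is vexillary if and only if $m$ is removable. (When $m$ is not a corner label, $\ell(ws_m)=\ell(w)+1$, so the left-hand side fails, and $m$ is not a removable label either, so both sides fail.)

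For the ``if'' direction, suppose $m$ is removable. Under the bijection of \S\ref{s.lyd} between labelled Young diagrams and essential triples, $Y\setminus m$ corresponds to an essential triple $\triple'$, and I would check directly from the construction of \S\ref{s.triples} that $w(\triple')=ws_m$. Each of the four relabelling rules is matched to a local change of $(\bk,\bp,\bq)$: rule (1) keeps a pair $(p_j,q_j)$; rule (2) lowers the affected $p$ by one; rule (3) raises it by one in the row above; rule (4) splits it into both. One verifies that these are exactly the changes effected on the one-line notation by interchanging $w(m)$ and $w(m+1)$ (or negating $w(1)$ when $m=0$), and that $\triple'$ still satisfies the strict inequalities \eqref{e.triple}. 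Since a signed permutation is determined by its essential set, this identifies $ws_m=w(\triple')$, which is therefore vexillary.

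The real content is the ``only if'' direction, which I would prove contrapositively: if $m$ is a corner label but not removable, then $ws_m$ is not vexillary. There are two failure modes. In the easy one, $m=0$ occurs once but its row has more than one box; then $p_s=1$ and $q_s\geq 2$, so every $q_j\geq 2$, hence no step ever places $\bar 1$ and the value $1$ occurs positively at a position $>1$, while $w(1)\leq\bar{q_s}$ gives $|w(1)|\geq q_s\geq 2$. Negating $w(1)$ therefore puts a positive entry $\geq 2$ in position $1$ ahead of the positive entry $1$, so $ws_0$ contains the pattern $[2\,1]$. The main case is when $m$ occurs at least twice: there are consecutive indices with $p_i=p_{i+1}=m+1$ and $q_i>q_{i+1}$, whence Lemma~\ref{l.descents} forces $w(m+1)\leq\bar{q_i}<\bar{q_{i+1}}<w(m)$, and one checks that $\bar{q_{i+1}}$ is placed, as the last entry of Step~(i+1), at a position $c>m+1$. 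After applying $s_m$ the three entries now sitting in positions $m<m+1<c$, augmented when necessary by one further (positive or negative) entry to their right, witness one of the nine forbidden patterns of Proposition~\ref{p.patterns}; which pattern occurs is dictated by the sign of $w(m)$ and by the comparison of $|w(m)|$ with $q_{i+1}$ (e.g.\ $[\bar 3\;2\;\bar 1]$ and $[\bar 4\;1\;\bar 2\;3]$ when $w(m)>0$, and an all-negative pattern such as $[\bar 3\;\bar 4\;\bar 1\;\bar 2]$ when $w(m)<0$). By Proposition~\ref{p.patterns}, $ws_m$ is not vexillary in either mode.

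The main obstacle is exactly this last case: producing, for every sign-and-magnitude configuration of the interchanged entries, the correct fourth witness and the resulting forbidden pattern. This bookkeeping essentially reprises the sign-by-sign analysis in the proof of Proposition~\ref{p.patterns}, and the cleanest route may instead be through the third characterization of Theorem~\ref{t.vex-characterization}: using the symmetric pair of diagram corners supplied by Lemma~\ref{l.ess-set} and Lemma~\ref{l.symmetric-essential}, one shows that $\iota(ws_m)=\iota(w)\,\iota(s_m)$ acquires an occurrence of $[2\,1\,4\,3]$, so that $\iota(ws_m)$ fails to be vexillary. A secondary, routine point is confirming in the ``if'' direction that each relabelling rule preserves the essentiality \eqref{e.triple} of the triple.
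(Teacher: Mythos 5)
Your architecture is the same as the paper's: reduce the length condition to ``$m$ is a corner label'' via Lemma~\ref{l.descents}, dispose of the ``if'' direction by checking that $Y\setminus m$ is the labelled Young diagram of $ws_m$ (exactly what the paper asserts just before the theorem and leaves to the reader), and prove the ``only if'' direction contrapositively through the pattern criterion of Proposition~\ref{p.patterns}, split into the same two failure modes (the $m=0$, long-row mode, which you handle correctly, and the repeated-label mode). The gap is in your main case: the claim that the three entries at positions $m<m+1<c$, ``augmented when necessary by one further entry \emph{to their right},'' witness a forbidden pattern. When $w(m)>0$ with $|w(m)|<q_{i+1}$, or when $w(m)<0$, the required fourth witness is a ``gap-filling'' value of absolute value strictly between $q_{i+1}$ and $q_i$, and a later step $j$ with $p_j<p$ can place that value to the \emph{left} of position $m$. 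Concretely, take $\triple=(\,1\;2\;3,\;5\;5\;3,\;7\;5\;5\,)$, so $w=1\;2\;\bar{6}\;3\;\bar{7}\;\bar{5}\;4$ with repeated label $m=4$. Then $ws_4=1\;2\;\bar{6}\;\bar{7}\;3\;\bar{5}\;4$, and the subword on positions $\{1,2,4,5,6,7\}$ contains none of the nine patterns (the positives $1,2,3,4$ increase, and no negative--positive--negative subsequence there has decreasing absolute values); the only forbidden occurrence is $[\bar{3}\;\bar{4}\;1\;\bar{2}]$ via $(\bar{6},\bar{7},3,\bar{5})$, whose first letter sits at position $3<m$. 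Your own illustrative pattern $[\bar{3}\;\bar{4}\;\bar{1}\;\bar{2}]$ for the case $w(m)<0$ already requires its ``$\bar{3}$'' to the left of the swap, so the right-side recipe is internally inconsistent with your examples.

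The paper's proof handles precisely this by listing, for each sign configuration of $w(p-1)$, patterns occurring in $w$ whose extra letter may lie left of position $p-1$ (e.g.\ $[\bar{3}\;1\;\bar{4}\;\bar{2}]$, $[\bar{3}\;\bar{1}\;\bar{4}\;\bar{2}]$, $[3\;\bar{1}\;\bar{4}\;\bar{2}]$, $[2\;4\;\bar{3}\;\bar{1}]$, $[\bar{2}\;4\;\bar{3}\;\bar{1}]$), each becoming one of the nine forbidden patterns after right-multiplication by $s_m$; the dichotomy is governed by where the skipped value between $\bar{q_i}$ and $\bar{q_{i+1}}$ lands (negative to the left, negative to the right, or positive), not only by the sign and magnitude of $w(m)$. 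Your proposed fallback through condition \eqref{cond3} of Theorem~\ref{t.vex-characterization} does not avoid this: an occurrence of $[2\;1\;4\;3]$ in $\iota(ws_m)$ may equally use columns left of the swapped pair, so the same case analysis reappears. The fix is local--replace ``to their right'' by the three-way analysis of where the gap is filled--after which your argument coincides with the paper's.
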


\begin{proof}
Let $\triple=(\bk,\bp,\bq)$ be the corresponding essential triple.  Using the pattern avoidance criterion for vexillarity, we will show that if a label $m$ is not removable, then $ws_m$ is not vexillary.

First suppose $m$ occurs more than once as a label in $Y$, or equivalently, the index $p=m+1$ is repeated in the sequence $\bp$.  From the construction of $w(\triple)$, the repetition of $p$ means there is a gap in the increasing sequence of negative integers starting at position $p$ in $w$.  In other words, there is a ``partial pattern'' $[\bar{3}\;\bar{1}]$ in $w$, with the $\bar{3}$ occurring at position $p$.

If $m=0$, then the gap must be filled by a positive integer occurring to the right.  That is, the pattern $[\bar{3}\;\bar{1}\;2]$ occurs in $w$, with the ``$\bar{3}$ in position $1$.  We find that $ws_0$ contains the pattern $[3\;\bar{1}\;2]$, so in particular it contains $[2\;1]$ and is not vexillary.

If $m>0$, and the entry of $w$ in position $m=p-1$ is positive, then one of the following patterns appears:
\begin{align*}
 [1\;\bar{4}\;\bar{2}\;3] & \text{ (with ``}1\text{'' at position }p-1\text{ and ``}\bar{4}\text{'' at position }p), \\
 [\bar{3}\;1\;\bar{4}\;\bar{2}] & \text{ (with ``}1\text{'' at position }p-1\text{ and ``}\bar{4}\text{'' at position }p), \\
 [2\;\bar{3}\;\bar{1}] & \text{ (with ``}2\text{'' at position }p-1\text{ and ``}\bar{3}\text{'' at position }p), \\
 [2\;4\;\bar{3}\;\bar{1}] & \text{ (with ``}4\text{'' at position }p-1\text{ and ``}\bar{3}\text{'' at position }p),\text{ or} \\
 [\bar{2}\;4\;\bar{3}\;\bar{1}] & \text{ (with ``}4\text{'' at position }p-1\text{ and ``}\bar{3}\text{'' at position }p).
\end{align*}
If the entry in position $p-1$ is negative, it must form a partial pattern $[\bar{1}\;\bar{4}\;\bar{2}]$, with the ``$\bar{1}$'' and ``$\bar{4}$'' in positions $p-1$ and $p$.  The gap can be filled to yield one of three patterns:
\[
  [\bar{1}\;\bar{4}\;\bar{2}\;3],\quad [\bar{3}\;\bar{1}\;\bar{4}\;\bar{2}], \quad \text{or} \quad [3\;\bar{1}\;\bar{4}\;\bar{2}].
\]
In each case, right-multiplication by $s_m$ yields a forbidden pattern, verifying that $ws_m$ is not vexillary.

Finally, suppose $m=0$ occurs as a label in a row with more than one box.  We may assume the label is not repeated, so this must be the last row of $Y$, i.e., $p_s=1$ and $q_s>1$.  From the construction, this means the positive integer $1$ occurs to the right of position $1$ in $w$.  Applying $s_0$ therefore results in the forbidden pattern $[2\;1]$.
\end{proof}

By reversing the rules for removing a corner, one obtains rules for adding a box to a labelled Young diagram.  Suppose $Y$ has labels $m_i$ for corners in rows $k_i$, and fix an index $j$ such that $m_j-m_{j+1}>1$.  
(To include extreme cases $j=0$ and $j=s$, we allow $0\leq j\leq s$ and use the conventions $k_0=0$, $m_0=+\infty$, $\lambda_0 = +\infty$, $k_{s+1}=k_s+1$, $m_{s+1}=-1$, and $\lambda_{k_{s+1}}=0$.)  
A new labelled Young diagram $Y\cup m$ is defined as follows, for certain $m$ to be specified.

\begin{enumerate}
\item If $k_{j+1}-k_{j}>1$ and $\lambda_{k_{j}}-\lambda_{k_{j+1}}>k_{j+1}-k_{j}+1$, then $Y\cup m$ is defined for any $m$ satisfying
\begin{align*}
    m_j &> m > m_{j+1}, \\
   m-m_{j+1}&\leq k_{j+1}-k_j, \qquad\text{ and } \\
  m_j-m &\leq \lambda_{k_j}-\lambda_{k_{j+1}}+k_j-k_{j+1},
\end{align*}
by creating a new corner labelled $m$ in row $k_j+1$ and leaving all other labels unchanged.

\smallskip

\item If $k_{j+1}-k_j=1$ and $\lambda_{k_{j}}-\lambda_{k_{j+1}}>2$, then $Y\cup m$ is defined for $m=m_{j+1}+1$, by placing a new box labelled $m$ in row $k_j+1$ and erasing the old label $m_{j+1}$.

\smallskip

\item If $k_{j+1}-k_j>1$ and $\lambda_{k_j}-\lambda_{k_{j+1}}=k_{j+1}-k_j+1$, then $Y\cup m$ is defined for $m=m_{j}-1$, by placing a new box labelled $m$ in row $k_j+1$ and erasing the old label $m_j$.

\smallskip

\item If $k_{j+1}-k_j=1$ and $\lambda_{k_j}-\lambda_{k_{j+1}}=2$, then $Y\cup m$ is defined for $m=m_{j+1}+1=m_j-1$, by placing a new box labelled $m$ in row $k_j+1$ and erasing the old labels $m_j$ and $m_{j+1}$.
\end{enumerate}

\noindent
An integer $m$ is \define{insertable} in a labelled Young diagram $Y$ if there is a $j$ such that $m_j>m>m_{j+1}$ and one of the above four cases holds.

\begin{corollary}
Suppose $w$ is vexillary, with labelled Young diagram $Y$.  Then $ws_m$ is vexillary of length $\ell(w)+1$ if and only if $m$ is insertable in $Y$, in which case its labelled Young diagram is $Y\cup m$.
\end{corollary}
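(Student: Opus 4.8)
The plan is to deduce the corollary from the preceding theorem by exploiting the fact that $s_m$ is an involution, so that the operation of adding a box is inverse to the operation of removing one. Write $w' = ws_m$; then $w = w's_m$, and since $s_m$ is a simple reflection, $\ell(ws_m)=\ell(w)+1$ holds if and only if $\ell(w's_m)=\ell(w')-1$. Thus the corollary is essentially the theorem read backwards, once we check that the four cases defining $Y\cup m$ are the exact reverses of the four relabelling rules (1)--(4) defining $Y'\setminus m$, and that ``$m$ insertable in $Y$'' matches ``$m$ removable in $Y\cup m$''.

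For the forward direction, suppose $ws_m$ is vexillary of length $\ell(w)+1$, and let $Y'$ be its labelled Young diagram. Applying the theorem to the vexillary signed permutation $w'$, the fact that $w's_m = w$ is vexillary of length $\ell(w')-1$ forces $m$ to be a removable label of $Y'$, with $Y = Y'\setminus m$. I would then read off, from the geometry of which new corners appear when the box labelled $m$ is deleted from $Y'$, that $Y'$ arises from $Y$ by one of the four insertion cases; in particular $m$ is insertable in $Y$, and the labelled Young diagram of $ws_m$ is $Y' = Y\cup m$.

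For the reverse direction, suppose $m$ is insertable in $Y$. Then $Y\cup m$ is a genuine labelled Young diagram—the inequalities imposed in the four insertion cases are arranged precisely so that the resulting labels again satisfy the defining constraints—so it corresponds under the bijection with essential triples to a vexillary signed permutation $w'$. By construction $m$ is a removable label of $Y\cup m$, so the theorem applied to $w'$ gives that $w's_m$ is vexillary of length $\ell(w')-1$ with labelled Young diagram $(Y\cup m)\setminus m = Y$. Since $Y$ is the diagram of $w$ and the correspondence between labelled Young diagrams and vexillary signed permutations is a bijection, we get $w's_m = w$, i.e. $w'=ws_m$; hence $ws_m$ is vexillary, of length $\ell(w')=\ell(w)+1$, with diagram $Y\cup m$.

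The main obstacle is the bookkeeping concealed in the equivalence ``$m$ insertable in $Y$'' $\iff$ ``$m$ removable in $Y\cup m$'' together with the matching of cases. Concretely, one must verify that inserting a box in row $k_j+1$ produces a genuine new corner carrying the asserted label exactly when one of the four case-conditions (comparing $k_{j+1}-k_j$ with $\lambda_{k_j}-\lambda_{k_{j+1}}$) holds, and that deleting that box returns $Y$ under rules (1)--(4); the boundary conventions $k_0=0$, $m_0=+\infty$, $k_{s+1}=k_s+1$, $m_{s+1}=-1$ must be tracked so that insertions at the extreme top and bottom of the diagram are handled correctly. This is a finite, purely combinatorial check of shifted-diagram geometry, requiring no new idea beyond the theorem itself.
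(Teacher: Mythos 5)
Your proposal is correct and follows essentially the same route as the paper, which states this corollary without further proof precisely because the insertion rules are defined as the reverses of the removal rules: applying the preceding theorem (together with the remark that $Y\setminus m$ is the diagram of $ws_m$ when $m$ is removable) to $w'=ws_m$, via the equivalence $\ell(ws_m)=\ell(w)+1 \iff \ell(w's_m)=\ell(w')-1$, is exactly the intended argument. The remaining content you identify---that insertion and removal are mutually inverse case by case, including the boundary conventions $k_0=0$, $m_0=+\infty$, $k_{s+1}=k_s+1$, $m_{s+1}=-1$, and that the correspondence between vexillary signed permutations and labelled Young diagrams is a bijection---is the same finite combinatorial check the paper leaves to the reader.
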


For a labelled Young diagram $Y$ with labels $m_i$ in rows $k_i$, set $l_i = \lambda_{k_i}-m_i-1$, and define $n(Y) = \max\{m_i+k_i,\,l_i+k_i\}_{1\leq i\leq s}$.

\begin{corollary}
Let $Y$ be a labelled Young diagram with corresponding vexillary signed permutation $w$.  If $n\geq n(Y)$, then there is a sequence of vexillary signed permutations $w=w_{(0)}, w_{(1)}, \ldots, w_{(r)}=w_\circ^{(n)}$, such that for each $i$ there is an $m$ with $w_{(i+1)}=w_{(i)}s_m$ and $\ell(w_{(i+1)})=\ell(w_{(i)})+1$.
\end{corollary}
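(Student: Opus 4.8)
The plan is to build the chain by repeatedly applying the preceding corollary, climbing one box at a time, inducting on $n^2-\ell(w)$. First I would record that the longest element $w_\circ^{(n)}$ is vexillary: its triple is $\bk=(1,\ldots,n)$, $\bp=\bq=(n,n-1,\ldots,1)$, so its labelled Young diagram is the shifted staircase $\delta_n=(2n-1,2n-3,\ldots,1)$, with a corner in every row $k_i=i$ carrying label $m_i=n-i$, and $|\delta_n|=n^2=\ell(w_\circ^{(n)})$. Since $w_\circ^{(n)}$ is the unique element of $W_n$ of maximal length $n^2$, the base case $\ell(w)=n^2$ of the induction forces $w=w_\circ^{(n)}$ and there is nothing to prove. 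For the inductive step it suffices to produce a single vexillary $w'=ws_m\in W_n$ with $\ell(w')=\ell(w)+1$; then $n(Y')\le n$ automatically (as $w'\in W_n$ is vexillary), and the inductive hypothesis applied to $w'$ supplies a chain from $w'$ up to $w_\circ^{(n)}$, which I prepend with the step $w\to w'$.

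Next I would make the reduction precise. By the previous corollary, $w'=ws_m$ is vexillary of length $\ell(w)+1$ exactly when $m$ is insertable in $Y$, and then the reflection index equals the inserted label. Moreover $ws_m\in W_n$ if and only if $m\le n-1$, since $s_0,\ldots,s_{n-1}$ generate $W_n$ while $s_m\notin W_n$ for $m\ge n$; so once $m\le n-1$ the bound $n(Y')\le n$ comes for free. The crucial observation is that an insertable label at any interval $j\ge 1$ is automatically admissible: insertability forces $m<m_j$, and $n(Y)\le n$ gives $m_j\le n-k_j\le n-1$, so $m\le n-1$. Thus the whole problem reduces to the combinatorial claim: \emph{every labelled Young diagram $Y\ne\delta_n$ with $n(Y)\le n$ admits an insertable label at some interval $j\ge 1$, or at the top $j=0$ with $m\le n-1$.}

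To prove the claim I would split on the label gaps. If some interval $j\in\{1,\ldots,s\}$ has $m_j-m_{j+1}>1$, I claim a valid insertion exists there. For $1\le j\le s-1$ this uses that between two consecutive corners the shape always drops by at least $k_{j+1}-k_j+1$ (the intermediate rows are non-corners, so $\lambda$ decreases by exactly $1$ per row, and the corner at $k_j$ accounts for one more), which is precisely the hypothesis making one of the four insertion cases apply; for $j=s$ the convention $\lambda_{k_{s+1}}=0$ together with $\lambda_{k_s}\ge 2$ (since $m_s\ge 1$) triggers Case (2) or (4), producing $m=0$. In all of these $m\le n-1$ by the observation above. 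Otherwise every gap with $1\le j\le s$ satisfies $m_j-m_{j+1}\le 1$; in particular $m_s=0$ and the labels descend from $m_1$ to $0$ in unit steps, so the only interval with a gap exceeding $1$ is the top, $j=0$. If $m_1\le n-2$, inserting at the top gives $m=m_1+1\le n-1$. The one remaining possibility is $m_1=n-1$, and here the inequalities $m_i+k_i\le n$, the strictness $k_1<\cdots<k_s$ (so $k_i\ge i$), and the unit-step bound $m_i\ge n-i$ force $m_i=n-i$ and $k_i=i$ for all $i$ (hence $s=n$); the corner condition $\lambda_i\ge\lambda_{i+1}+2$ together with $l_i+k_i\le n$ then forces $l_i=n-i$ and $\lambda_i=2(n-i)+1$, i.e.\ $Y=\delta_n$, contrary to assumption.

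I expect the main obstacle to be this last forcing argument---showing that the only diagram from which no admissible box can be inserted is the staircase $\delta_n$ itself. Everything else is bookkeeping with the insertion rules, but pinning down that the three constraints ($m_i+k_i\le n$, $l_i+k_i\le n$, and the corner/strict-decrease conditions) simultaneously saturate exactly at $\delta_n$ is where the real content lies; it is what guarantees that the induction never stalls before reaching $w_\circ^{(n)}$.
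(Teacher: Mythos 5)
Your proposal is correct and takes essentially the same route as the paper's proof: both reduce the corollary to the claim that any labelled Young diagram with $n(Y)\le n$ other than the staircase $(2n-1,2n-3,\ldots,1)$ with labels $n-1,\ldots,0$ admits an insertable $m$ with $0\le m\le n-1$, established by the same forcing argument (no admissible insertion forces $m_i=n-i$ in row $i$ for $i=1,\ldots,n$, and then $l_i+k_i\le n$ pins the shape down to that of $w_\circ^{(n)}$). Your write-up simply makes explicit some bookkeeping the paper leaves implicit, namely the induction on $n^2-\ell(w)$ and the observation that $w s_m\in W_n$ keeps $n(Y')\le n$ along the chain.
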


\begin{proof}
Unless $Y$ has shape $\lambda=(2n-1,2n-3,\ldots,1)$ with labels $n-1,n-2,\ldots,0$ (corresponding to the vexillary signed permutation $w_\circ^{(n)}$), there is an insertable $m$ with $0\leq m\leq n-1$.  Indeed, suppose $n\geq n(Y)$, but no such insertable $m$ exists.  Then rules (i) and (ii) imply $m_1=n-1$ must appear in the first row; all the rules together then show $m_2=n-2$ appears in the second row; and continuing this way, we see $0$ must appear in the $n$th row.  A diagram with these labels has $n(Y)\leq n$ only if it is the one corresponding to $w_\circ^{(n)}$ (in which case $n(Y)=n$).
\end{proof}

The inequalities on $m_i$ imply that the labels can be replaced by $l_i = \lambda_{k_i}-m_i-1$ to form a \define{dual labelled Young diagram} $Y^*$.  If $Y$ corresponds to the triple $\triple$, then $Y^*$ corresponds to the dual triple $\triple^*$, so the corresponding vexillary signed permutations are inverses.  The analogous results for left-multiplication by $s_m$ follow from these observations.

\begin{corollary}
Let $Y$ be a labelled Young diagram corresponding to a vexillary signed permutation $w$.  Then $s_m w$ is vexillary of length $\ell(w)-1$ if and only if $m$ is a removable label of $Y^*$.  Similarly, $s_m w$ is vexillary of length $\ell(w)+1$ if and only if $m$ is insertable in $Y^*$.

If $n\geq n(Y)$, then there is a sequence of vexillary signed permutations $w=w_{(0)}, w_{(1)}, \ldots, w_{(r)}=w_\circ^{(n)}$, such that for each $i$ there is an $m$ with $w_{(i+1)}=s_m w_{(i)}$ and $\ell(w_{(i+1)})=\ell(w_{(i)})+1$.
\end{corollary}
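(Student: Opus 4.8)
The plan is to deduce everything from the dualities already established, exploiting the fact that left-multiplication by $s_m$ becomes right-multiplication after passing to inverses. First I would record the three facts that drive the reduction: (a) for any signed permutation $v$ one has $\ell(v)=\ell(v^{-1})$; (b) $v$ is vexillary if and only if $v^{-1}$ is, which is immediate from Lemma~\ref{l.inverse}, since the inverse of $w(\triple)$ is $w(\triple^*)$; and (c) because $s_m$ is an involution, $(s_m w)^{-1}=w^{-1}s_m$. Combining (a)--(c), the assertion ``$s_m w$ is vexillary of length $\ell(w)\pm 1$'' is equivalent to ``$w^{-1}s_m$ is vexillary of length $\ell(w^{-1})\pm 1$''.

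Next I would invoke the results already proved for right-multiplication. By hypothesis $w=w(\triple)$ has labelled Young diagram $Y$, so $w^{-1}=w(\triple^*)$ has labelled Young diagram $Y^*$. Applying the preceding Theorem to $w^{-1}$ shows that $w^{-1}s_m$ is vexillary of length $\ell(w^{-1})-1$ if and only if $m$ is a removable label of $Y^*$; applying the preceding Corollary to $w^{-1}$ shows that $w^{-1}s_m$ is vexillary of length $\ell(w^{-1})+1$ if and only if $m$ is insertable in $Y^*$. Translating back through the equivalence of the first paragraph yields both ``if and only if'' statements of the corollary.

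For the chain to $w_\circ^{(n)}$, I would apply the preceding chain-Corollary to $w^{-1}$, which requires the hypothesis $n\geq n(Y^*)$. Here the key computation is that $n(Y)=n(Y^*)$: the dual diagram $Y^*$ has labels $l_i=\lambda_{k_i}-m_i-1$ in rows $k_i$, and its own dual labels are $\lambda_{k_i}-l_i-1=m_i$, so the defining maximum $\max\{m_i+k_i,\,l_i+k_i\}$ is symmetric in $m_i$ and $l_i$ and hence unchanged. Thus $n\geq n(Y)$ forces $n\geq n(Y^*)$, and the chain-Corollary produces a sequence $w^{-1}=u_{(0)},\ldots,u_{(r)}=w_\circ^{(n)}$ with $u_{(i+1)}=u_{(i)}s_m$ and lengths increasing by one at each step.

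Finally I would take inverses of this sequence, setting $w_{(i)}=u_{(i)}^{-1}$. Then $w_{(0)}=w$, each step becomes $w_{(i+1)}=(u_{(i)}s_m)^{-1}=s_m w_{(i)}$ with length still increasing by one by fact (a), and the terminal element is $(w_\circ^{(n)})^{-1}$. Since $w_\circ^{(n)}=\bar1\,\bar2\cdots\bar n$ is an involution---it exchanges $i$ and $\bar\imath$---we have $(w_\circ^{(n)})^{-1}=w_\circ^{(n)}$, so the sequence ends at $w_\circ^{(n)}$ as required. The only step needing genuine verification is the identity $n(Y)=n(Y^*)$; everything else is formal manipulation with inverses, so I do not expect any substantive obstacle.
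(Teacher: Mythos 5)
Your proposal is correct and follows essentially the same route as the paper, which likewise deduces this corollary from the right-multiplication results by passing to inverses: it observes that $Y^*$ corresponds to the dual triple $\triple^*$, that $w(\triple^*)=w(\triple)^{-1}$, and then states that the left-multiplication statements "follow from these observations." You merely spell out the formal steps the paper leaves implicit, including the one genuinely substantive check, $n(Y)=n(Y^*)$, which you verify correctly from the symmetry of $\max\{m_i+k_i,\,l_i+k_i\}$ under $m_i\leftrightarrow l_i$.
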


Inserting a label is a more flexible operation than removing a label.  Starting from a given vexillary permutation $w$, there need not be a descending chain of vexillary permutations $w_{(i)}$ ending in $w_{(r)}=\mathrm{id}$, with $w_{(i+1)}=w_{(i)}s_m$ and $\ell(w_{(i+1)})=\ell(w_{(i)})-1$.  For instance, taking $\triple=(2\;3,\;2\;2,\;3\;1)$ as in Figure~\ref{f.lyd}, the only $m$ such that $\ell(ws_m)=\ell(w)-1$ is $m=1$, but this is not a removable label.

However, at least one of $Y$ or $Y^*$ always has a removable label.  Letting $\triple$ be the corresponding triple, the next statement is proved by induction on the length of $w(\triple)$, removing a label from either $Y$ or $Y^*$.

\begin{corollary}\label{c.ydl}
For $w=w(\triple)$, we have $\ell(w) = |\lambda(\triple)|$.
\end{corollary}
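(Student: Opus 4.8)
The plan is to prove $\ell(w) = |\lambda(\triple)|$ by strong induction on $\ell(w)$, using the box-removal operation $Y \mapsto Y \setminus m$ together with the two preceding results: that $m$ is a removable label of $Y$ if and only if $w s_m$ is vexillary with $\ell(w s_m) = \ell(w) - 1$ and diagram $Y \setminus m$, and the dual statement for $Y^*$ and left multiplication. The base case is $w = \mathrm{id}$, where $\triple$ is empty and $|\lambda| = 0 = \ell(w)$. Before the inductive step I would record a symmetry that lets me reduce to a single case: by Lemma~\ref{l.inverse} the inverse $w^{-1}$ equals $w(\triple^*)$ and has labelled Young diagram $Y^*$, while $\lambda(\triple^*) = \lambda(\triple)$, since the corners lie in the same rows $k_i$ and $\lambda_{k_i} = p_i + q_i - 1$ is symmetric in $\bp$ and $\bq$. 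Thus both $\ell$ and $|\lambda|$ are unchanged under $w \mapsto w^{-1}$, so it suffices to treat whichever of $w, w^{-1}$ has a removable label in its diagram.

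The crux is to show that at least one of $Y, Y^*$ carries a removable label whenever $w \neq \mathrm{id}$, i.e. $s \geq 1$. I would inspect the topmost corner, in row $k_1$, which carries the label $m_1 = p_1 - 1$ in $Y$ and $l_1 = q_1 - 1$ in $Y^*$; since $p_1 \geq \cdots \geq p_s$ and $q_1 \geq \cdots \geq q_s$, these are the respective maximal labels. If $s = 1$ there is a single label on each side: $m_1$ is removable unless $m_1 = 0$ (i.e. $p_1 = 1$) in a multi-box row, and $l_1 = q_1 - 1$ is then removable unless also $q_1 = 1$, in which case $\lambda_{k_1} = p_1 + q_1 - 1 = 1$ makes the row a single box and $m_1$ removable after all. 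If $s \geq 2$, the essential-triple inequality $p_1 - p_2 + q_1 - q_2 \geq k_2 - k_1 \geq 1$ forbids $p_1 = p_2$ and $q_1 = q_2$ holding together. When $p_1 > p_2$ the label $m_1$ is strictly larger than every other $m_i$, hence occurs once, and $m_1 = p_1 - 1 \geq p_2 \geq 1 > 0$ makes it removable; when $p_1 = p_2$ the inequality forces $q_1 > q_2$, so symmetrically $l_1 = q_1 - 1 \geq q_2 \geq 1 > 0$ occurs once in $Y^*$ and is removable. This settles every case.

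With the claim established, the inductive step is routine. Replacing $w$ by $w^{-1}$ if necessary, I may assume $Y$ has a removable label $m$. The removal operation deletes exactly one box, so $Y \setminus m$ has shape a strict partition $\mu$ with $|\mu| = |\lambda| - 1$; by the preceding theorem it is the labelled Young diagram of the vexillary permutation $w s_m$, which satisfies $\ell(w s_m) = \ell(w) - 1$. Applying the inductive hypothesis to $w s_m$ yields $\ell(w s_m) = |\mu| = |\lambda| - 1$, so $\ell(w) = |\lambda|$. I expect the only real obstacle to be the claim that one of $Y, Y^*$ admits a removable label; the remaining steps are bookkeeping built on the cover relations already proved. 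The top-corner argument is short precisely because the essential-triple inequality rules out the single obstruction---a repeated maximal label occurring on both sides simultaneously.
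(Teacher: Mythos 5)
Your proof is correct and follows essentially the same route as the paper: induction on $\ell(w)$, removing a removable label from $Y$ or from $Y^*$ (equivalently passing to $w^{-1}$ via Lemma~\ref{l.inverse}, under which both $\ell$ and $\lambda$ are invariant) and invoking the theorem that removable labels realize length-decreasing vexillary covers with diagram $Y\setminus m$. You in fact go slightly beyond the paper, which only asserts without proof that at least one of $Y$, $Y^*$ always has a removable label; your top-corner case analysis, using the triple inequality to rule out $p_1=p_2$ and $q_1=q_2$ holding simultaneously, correctly fills in that detail.
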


\noindent
(This can also be proved directly from the construction of $w(\triple)$ by counting inversions.)

\section{Transitions}\label{s.transitions}

A signed permutation $w$ is {\em maximal grassmannian} if its only descent is at $0$.  A maximal grassmannian signed permutation corresponds to a strict partition $\lambda$, by recording the absolute values of the barred entries; for example, $w=\bar{4}\;\bar{2}\;\bar{1}\;3$ corresponds to $\lambda=(4,2,1)$.  It is straightforward to check that maximal grassmannian signed permutations are vexillary: its labelled Young diagram has shape $\lambda$ and all corners labelled $0$.  For a strict partition $\lambda$, we will write $w_\lambda$ for the corresponding maximal grassmannian signed permutation.

{\em Transitions} provide a way of reducing arbitrary signed permutations to maximal grassmannian ones.  They were used by Billey to study Schubert polynomials and Stanley symmetric functions \cite{billey}.

For $i<j$, let $t_{ij}$ be the transposition exchanging positions $i$ and $j$, and for $i\leq j$ let $s_{ij}$ exchange $i$ and $\bar\jmath$.  (Thus $t_{ij}$ is the reflection in the hyperplane defined by $e_i-e_j$, and $s_{ij}$ is the reflection in the hyperplane defined by $e_i+e_j$.) 
For any signed permutation $w$, let $m$ be the last descent, and let $j$ be the largest index greater than $m$ such that $w(m)>w(j)$.  A \define{transition} of $w$ is a signed permutation $w^-$, of the same length as $w$, such that $w^- = wt_{mj} t_{im}$ for some $i<m$ or $w^- = w t_{mj} s_{im}$ for any $i$.

Definitions and properties of the type B Stanley symmetric function $H_w$ may be found in \cite{bh} or \cite{fk}.  Here we need two properties, from \cite{billey}.  First, for a strict partition $\lambda$, we have
\begin{equation}\label{e.max-grass}
 H_{w_\lambda} = P_\lambda,
\end{equation}
where the latter is the Schur $P$-function.  Second, for any $w$, with $m$ and $j$ defined as above, there is a recursive formula
\begin{equation}\label{e.trans-recursion}
  H_w = (\;2 H_{w t_{mj} s_{mm}} \;+\;)\;\sum_{w^-\neq w t_{mj} s_{mm}} H_{w^-},
\end{equation}
the sum over transitions of $w$, with the first term appearing when $\ell(w t_{mj} s_{mm})=\ell(w)$ (i.e., when this is also a transition).

\begin{lemma}\label{l.trans-vex}
Let $Y$ be a labelled Young diagram, and assume the largest label $m$ is greater than $0$.  Suppose $m=m_1=\cdots=m_r>m_{r+1}$, and let $Y^-$ be the result of replacing the $r$th corner label $m_r$ by $m-1$.  Let $w$ and $w^-$ be the corresponding vexillary signed permutations.  Then $w^-$ is the unique transition of $w$.
\end{lemma}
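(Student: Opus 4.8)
The plan is to extract the transition data $(m,j)$ directly from the construction of $w=w(\triple)$, produce $w^-$ as one transition, and then prove it is the only one. By Lemma~\ref{l.descents} the descents of $w$ are at the positions $p_i-1$, so the last descent is $m=p_1-1$, the top corner label (the hypothesis $m>0$ is just $p_1>1$); write $p_1=\cdots=p_r$ for the repeated value. Steps $(1)$ through $(r)$ of the construction place, in the consecutive positions $p_1,\ldots,p_1+k_r-1$, a single increasing run of barred entries ending in $\bar{q_r}$. By Lemma~\ref{l.ess-set} these are exactly the positions $\ge p_1$ carrying a value $\le\bar{q_r}$ (their number is $r_w(p_1,q_r)=k_r$), so every later entry exceeds $\bar{q_r}$; since $w(m)>\bar{q_r}\ge w(p_1)$, a short argument shows the largest index with $w(j)<w(m)$ is $j=p_1+k_r-1$, with $w(j)=\bar{q_r}$.

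Next I would set $v=w\,t_{mj}$. As $m$ is the last descent and $j$ is maximal, $w\mapsto v$ is the standard first step of a transition, a Bruhat cover: $\ell(v)=\ell(w)-1$, $v(m)=\bar{q_r}$, and $v(j)=w(m)$. The diagram move replaces $m_r=p_1-1$ by $m-1$ at fixed shape, so $w^-=w(\triple^-)$ with $p_r^-=p_1-1$ and $q_r^-=q_r+1$; in particular $\lambda(\triple^-)=\lambda(\triple)$, whence $\ell(w^-)=\ell(w)$ by Corollary~\ref{c.ydl}. Running the construction for $\triple^-$, group $r$ now begins at position $m$ and drops there its smallest entry $\overline{N}$, where $N=q_r+(k_r-k_{r-1})$ (with $k_0=0$). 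The integer $N$ occurs in $w$ with a single sign, at a unique position $i_0$; comparing the one-line words of $v$ and $w^-$ shows that $w^-$ is obtained from $v$ by the one reflection pairing $m$ with $i_0$, namely $w^-=v\,t_{i_0 m}$ (with $i_0<m$) if $w(i_0)=\overline{N}$, and $w^-=v\,s_{i_0 m}$ if $w(i_0)=N$. Hence $w^-$ is a transition of $w$.

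It then remains to show this is the only transition, i.e.\ that no other reflection pairing position $m$ increases $\ell(v)$ by exactly one. Since $v$ differs from $w$ only at positions $m$ and $j$, the candidate covers are governed by the entries neighbouring $v(m)=\bar{q_r}$, and the point is that the \emph{essential} (strict) inequalities in $\triple$ force $\bar{q_r}$ to have a single such Bruhat cover, namely the one exchanging in $\pm N$. I expect this to be the crux. Concretely one must (a) rule out the sign change $v\,s_{mm}$, the potential source of the factor-$2$ term in \eqref{e.trans-recursion}, together with all competing $t_{im}$ and $s_{im}$, and (b) show that the strict triple inequalities make the two cases $w(i_0)=\overline{N}$ and $w(i_0)=N$ mutually exclusive and each arise exactly once. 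A useful bookkeeping tool throughout is the block description $a(i)\mapsto\overline{b(i)}$ from the proof of Lemma~\ref{l.inverse}: the passage $\triple\to\triple^-$ alters only $a(r)$ (by $j\mapsto m$) and $b(r)$ (by $q_r\mapsto N$), which renders both the determination of $j$ and the comparison of $v$ with $w^-$ completely explicit.
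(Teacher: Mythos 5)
Your setup coincides with the paper's own: you locate the last descent $m=p_1-1$, identify $j=m+k_r$ with $w(j)=\bar{q}_r$, note that $v=wt_{mj}$ is a cover, compute $w^-(m)=\bar{N}$ with $N=q_r+(k_r-k_{r-1})$, and using Corollary~\ref{c.ydl} to get $\ell(w^-)=\ell(w)$ from the fixed shape is a clean touch. But your explicit identification of the second reflection is wrong in a boundary case: the reflection pairs $m$ with the position of $\pm N$ in $v=wt_{mj}$, not in $w$, and these differ exactly when $w(m)=N$, which does occur. Take $\triple=(\,(1),(2),(1)\,)$, so $w=2\;\bar{1}$, $m=1$, $j=2$, and $N=2=w(m)$: the unique transition is $vs_{12}=\bar{2}\;1=w(\,(1),(1),(2)\,)$, where $N$ sits at position $j$ of $v$, whereas your recipe reads off $i_0=m$ from $w$ and yields $vs_{mm}=1\;2$ --- precisely the sign change you elsewhere plan to rule out. (The paper's second case, ``smallest $i>m$ with $w(i)>q_r$,'' has the same blind spot in this example and should likewise be read in $v$; but the paper does not rest its uniqueness argument on this identification.) Your bookkeeping claim that $\triple\to\triple^-$ alters only $a(r)$ and $b(r)$ also fails in general: for $\triple=(\,(1,2),(3,2),(2,1)\,)$ one gets $w=3\;\bar{1}\;\bar{2}$ and $w^-=2\;\bar{3}\;\bar{1}$, and $a(2)$ moves from $\{2\}$ to $\{3\}$ because step $r+1$ of the $\triple^-$ construction finds position $m$ occupied; so the word comparison is not ``completely explicit'' and requires a genuine propagation argument using the strict inequalities.

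The larger gap is that the uniqueness half --- the actual content of the lemma --- is never proved: you state that it ``remains,'' call it ``the crux,'' and list tasks (a) and (b) without arguments. Item (b) is moreover vacuous as formulated, since in any signed permutation exactly one of $\pm N$ occurs among the positive positions, so the cases $w(i_0)=\bar{N}$ and $w(i_0)=N$ are automatically exclusive; the real work is ruling out every competing reflection at $m$. The paper discharges this with a concrete length analysis: with $i$ its chosen index, right-multiplication of $wt_{mj}$ by $t_{i'm}$ decreases length for $i<i'<m$, increases it by at least $2$ (overshooting $\ell(w)$) for $0<i'<i$, and $\ell(wt_{mj}s_{i'm})\neq\ell(w)$ for all $i'$ (with the symmetric statement in its second case). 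Your proposal neither performs this analysis nor proposes a mechanism that would replace it, so as it stands it establishes (modulo the boundary fix above) that $w^-$ is a transition of $w$, but not that it is the only one.
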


\begin{proof}
Let $\triple=(\bk,\bp,\bq)$ be the triple corresponding to $Y$.  From the construction, $m=p_1-1=\cdots=p_r-1$, and $j=m+k_r$.  Note that $w(j)=\bar{q}_r$.   The transposition $t_{mj}$ swaps $w(m)$ and $w(j)$; since the sequence $w(p_1),\ldots,w(j)$ is increasing, $\ell(wt_{mj}) = \ell(w)-1$.

If there is an entry to the left of position $m$ that is less than $\bar{q}_r$, let $i<m$ be the largest such index.  Then $w^-=wt_{mj}t_{im}$ is a transition of $w$, and one checks that its labelled Young diagram is $Y^-$.  Furthermore, for any $i<i'<m$, right-multiplication by $t_{i'm}$ decreases the length of $wt_{mj}$, and for any $0<i'<i$, right-multiplication by $t_{i'm}$ increases the length of $wt_{mj}$ by at least $2$; similarly, $\ell(wt_{mj}s_{i'm})\neq\ell(w)$ for any $i'$.

If there is no entry to the left of $m$ less than $\bar{q}_r$, let $i>m$ be the smallest index such that $w(i)>q_r$.  Then $w^-=wt_{mj}s_{mi}$ is a transition of $w$, with labelled Young diagram $Y^-$.  One checks as before that this is the only transition.
\end{proof}

Combining Equations~\eqref{e.max-grass} and \eqref{e.trans-recursion} with Lemma~\ref{l.trans-vex} yields a formula for the Stanley symmetric function of a vexillary signed permutation.

\begin{corollary}\label{c.stanley}
The Stanley symmetric function $H_{w(\triple)}$ is equal to the Schur $P$-function $P_{\lambda(\triple)}$.
\end{corollary}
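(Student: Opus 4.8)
The plan is to prove $H_{w(\triple)} = P_{\lambda(\triple)}$ by induction on the sum $\sum_i(p_i-1)$ of the corner labels of the labelled Young diagram $Y$ attached to $\triple$, holding the shape $\lambda = \lambda(\triple)$ fixed throughout. Note that one cannot induct on length, since a transition preserves length; the label-sum is the quantity that will strictly decrease. The base case is when this sum is $0$, i.e.\ every corner label equals $0$: then $w(\triple)$ is the maximal grassmannian permutation $w_\lambda$, and the identity is exactly Equation~\eqref{e.max-grass}.

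For the inductive step, suppose the largest label $m=m_1$ is positive. I would invoke Lemma~\ref{l.trans-vex}: $w=w(\triple)$ then has a unique transition $w^-$, whose labelled Young diagram $Y^-$ is obtained from $Y$ by lowering one occurrence of the top label $m$ to $m-1$. Decrementing this label sends $p_r\mapsto p_r-1$ and $q_r = \lambda_{k_r}-m_r \mapsto q_r+1$, so $\lambda_{k_r}=p_r+q_r-1$ is unchanged; hence $Y^-$ has the same shape $\lambda$ and strictly smaller label-sum. Writing $\triple^-$ for the triple of $Y^-$, I would then feed the uniqueness of the transition into the recursion \eqref{e.trans-recursion}, so that the sum over transitions collapses to the single term $H_{w^-}$.

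The one substantive point is to rule out the doubling term $2\,H_{wt_{mj}s_{mm}}$ in \eqref{e.trans-recursion}. This term is present exactly when $wt_{mj}s_{mm}$ is itself a transition, i.e.\ when $\ell(wt_{mj}s_{mm})=\ell(w)$. But Lemma~\ref{l.trans-vex} exhibits the unique transition as $wt_{mj}t_{im}$ with $i<m$, or as $wt_{mj}s_{mi}$ with $i>m$; in either case $i\neq m$, so $wt_{mj}s_{mm}$ is not that transition. By the uniqueness asserted in the lemma it is therefore not a transition at all, its length differs from $\ell(w)$, and the doubled term does not appear. Consequently \eqref{e.trans-recursion} reduces cleanly to $H_w=H_{w^-}$.

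Finally I would close the induction: since $Y^-$ has the same shape $\lambda$ and strictly smaller label-sum, the inductive hypothesis gives $H_{w^-}=P_{\lambda(\triple^-)}=P_\lambda$, whence $H_{w(\triple)}=H_{w^-}=P_\lambda=P_{\lambda(\triple)}$. I expect the only delicate step to be the bookkeeping in the transition recursion—confirming that the uniqueness in Lemma~\ref{l.trans-vex} simultaneously collapses the summation and eliminates the special doubled term—since the combinatorics of the label decrement and the preservation of $\lambda$ are already packaged inside Lemma~\ref{l.trans-vex}.
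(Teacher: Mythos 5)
Your proof is correct and follows the same route as the paper, whose entire argument is precisely the combination of Equation~\eqref{e.max-grass}, the transition recursion~\eqref{e.trans-recursion}, and the uniqueness in Lemma~\ref{l.trans-vex}. You have merely made explicit what the paper leaves implicit: the induction on the label-sum (with the shape $\lambda(\triple)$ fixed, since a transition preserves length), and the observation that the unique transition has $i\neq m$ in both cases of Lemma~\ref{l.trans-vex}, so $wt_{mj}s_{mm}$ is not a transition and the doubled term in~\eqref{e.trans-recursion} never appears---both points check out.
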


It follows that for a vexillary signed permutation $w=w(\triple)$, the partition $\lambda^B(w)$ defined in \cite[\S5]{bl} is equal to our $\lambda(\triple)$.



\end{document}